\DeclareMathAlphabet{\mathcal}{OMS}{cmsy}{m}{n}
\crefname{ALC@unique}{Line}{Lines}
\pgfplotsset{compat=1.12}
\newcommand{\abs}[1]{\left|#1\right|}
\newcommand{\tends}{\rightarrow}
\newcommand{\smnorm}[1]{\lVert#1\rVert}
\newcommand{\norm}[1]{\left\|#1\right\|}
\newcommand{\supp}{{\rm supp} \,}
\newcommand{\half}{\frac{1}{2}}
\newcommand{\ie}{i.e.\ }
\newcommand{\tst}{\textstyle}
\newcommand{\m}[1]{\mathcal{#1}}
\newcommand{\bb}[1]{\mathds{#1}}
\newcommand{\eps}{\varepsilon}
\newcommand{\R}{\bb R}
\newcommand{\N}{\bb N}
\newcommand{\p}{\partial}
\renewcommand{\theta}{{\vartheta}}
\renewcommand{\O}{{\Omega}}
\newcommand{\oO}{\overline{\O}}
\newcommand{\pO}{\p \O}
\newcommand{\calT}{\mathcal{T}}
\newcommand{\tiab}[1]{\tilde{#1}_i^{(\a,\b)}}
\newcommand{\ttiab}[1]{\tilde{\tilde{#1}}_i^{(\a,\b)}}
\newcommand{\si}[1]{{\mathsf{#1}}_i}
\newcommand{\siab}[1]{{\mathsf{#1}}_i^{(\alpha,\beta)}}
\newcommand{\siabo}[1]{{\mathsf{#1}}_{\O, i}^{(\alpha,\beta)}}
\newcommand{\siabr}[1]{{\mathsf{#1}}^{(\alpha,\beta)}_{\pO, i}}
\newcommand{\sihahb}[1]{{\mathsf{#1}}_i^{(\hat{\alpha},\hat{\beta})}}
\newcommand{\siw}[1]{{\mathsf{#1}}_i^w}
\newcommand{\sikvi}[1]{{\mathsf{#1}}_i^{k,v_i}}
\newcommand{\siwk}[1]{{\mathsf{#1}}_i^{k,w}}
\newcommand{\hil}[1]{\hat{#1}_i^\ell}
\newcommand{\ika}[1]{{#1}_i^\kappa}
\newcommand{\ila}[1]{{#1}_i^\lambda}
\newcommand{\il}[1]{{#1}_i^\ell}
\newcommand{\ik}[1]{{#1}_i^k}
\newcommand{\iko}[1]{{#1}_i^{k+1}}
\newcommand{\iab}[1]{{#1}_i^{(\alpha,\beta)}}
\newcommand{\ihahb}[1]{{#1}_i^{(\hat{\alpha},\hat{\beta})}}
\newcommand{\hahb}[1]{{#1}^{(\hat{\alpha},\hat{\beta})}}
\newcommand{\bia}[1]{\bar{#1}_i^\alpha}
\newcommand{\biab}[1]{\bar{#1}_i^{(\alpha,\beta)}}
\newcommand{\bbia}[1]{\bar{\bar{#1}}_i^\alpha}
\newcommand{\bbiab}[1]{\bar{\bar{#1}}_i^{(\alpha,\beta)}}
\newcommand{\bihahb}[1]{\bar{#1}_i^{(\hat{\alpha},\hat{\beta})}}
\newcommand{\bbihahb}[1]{\bar{\bar{#1}}_i^{(\hat{\alpha},\hat{\beta})}}
\newcommand{\bbivi}[1]{\bar{\bar{#1}}_i^{(\alpha^{k, \ell}(v_i),\beta^{k, \ell}(v_i))}}
\newcommand{\bial}[1]{\bar{#1}_i^{(\alpha, \beta),\ell}}
\newcommand{\bbial}[1]{\bar{\bar{#1}}_i^{(\alpha,\beta),\ell}}
\newcommand{\Id}{{\sf Id}}
\renewcommand{\P}{{P_i}}
\newcommand{\infnorm}[1]{\smnorm{#1}_{L^\infty(\O)}}
\newcommand{\supiab}{\sup_{\substack{i \in \N \\ (\a,\b) \in A \times B}}}
\newcommand{\infiab}{\inf_{\substack{i \in \N \\ (\a,\b) \in A \times B}}}
\newtheorem{assumption}{Assumption}[section]
\newtheorem{definition}{Definition}[section]
\newtheorem{example}{Example}[section]
\newtheorem{lemma}{Lemma}[section]
\newtheorem{theorem}{Theorem}[section]
\renewcommand{\a}{\alpha}
\renewcommand{\b}{\beta}
\newcommand{\pair}[2]{\langle #1,#2 \rangle}
\newcommand{\RN}{\mathds{R}^n }
\newsavebox{\@brx}
\newcommand{\llangle}[1][]{\savebox{\@brx}{\(\m@th{#1\langle}\)}%
  \mathopen{\copy\@brx\kern-0.5\wd\@brx\usebox{\@brx}}}
\newcommand{\rrangle}[1][]{\savebox{\@brx}{\(\m@th{#1\rangle}\)}%
  \mathclose{\copy\@brx\kern-0.5\wd\@brx\usebox{\@brx}}}
\begin{document}

\title{Finite Element Methods for Isotropic Isaacs Equations with Viscosity and Strong Dirichlet Boundary Conditions\thanks{Bartosz Jaroszkowski (B.Jaroszkowski@sussex.ac.uk) acknowledges the support of the EPSRC grant 1816514. Max Jensen (M.Jensen@sussex.ac.uk) acknowledges the support of the Dr Perry James Browne Research Centre.}}

\author{Bartosz Jaroszkowski \and Max Jensen}

\date{Department of Mathematics, University of Sussex, Brighton, UK}

\maketitle

\begin{abstract}
We study monotone P1 finite element methods on unstructured meshes for fully non-linear, degenerately parabolic Isaacs equations with isotropic diffusions arising from stochastic game theory and optimal control and show uniform convergence to the viscosity solution.  Elliptic projections are used to manage singular behaviour at the boundary and to treat a violation of the consistency conditions from the framework by Barles and Souganidis by the numerical operators. Boundary conditions may be imposed in the viscosity or in the strong sense, or in a combination thereof. The presented monotone numerical method has well-posed finite dimensional systems, which can be solved efficiently with Howard's method. 
%\keywords{Finite element methods \and Isaacs equations \and Bellman equations \and fully nonlinear \and viscosity solution \and boundary conditions}
%\subclass{65M60 \and 65M12 \and 35D40 \and 35K65 \and 35K55}
\end{abstract}

%%%%%%%%%%%%%%%%%%%%%%%%%%%%%%%%%%%%%%%%%%%%%%%%%%%%%%%%%%%%%%%%%%%%%%%%
\section{Introduction}
%%%%%%%%%%%%%%%%%%%%%%%%%%%%%%%%%%%%%%%%%%%%%%%%%%%%%%%%%%%%%%%%%%%%%%%%

Hamilton--Jacobi--Isaacs equations, in short Isaacs equations, arise from stochastic game theory and optimal control, in particular from stochastic two player zero-sum games \cite{souganidis_twoplayer_viscosity}. The solution of the Isaacs equation corresponds under appropriate assumptions to a value function of the game. The equations are of the structure
\begin{align} \label{eq:isaacsintro}
-\partial_t v + \adjustlimits \inf
_\beta \sup_\alpha (L^{(\a,\b)} v - f^{(\a,\b)}) = 0,
\end{align}
where $\inf \sup$ is taken over a family of second-order linear degenerately elliptic differential operators $L^{\a,\b}$. 

Isaacs equations combine a number of features which are challenging from the numerical point of view. They are fully nonlinear in non-divergence form and the nonlinearity is in general non-convex. Their spatial differential operators may only be degenerately elliptic and often do not exhibit smoothness and structure properties to make linearisation a usable tool. Furthermore, the theory of well-posedness relies, in the setting of viscosity solutions, on comparison principles, which in turn make it natural to impose monotonicity requirements on the numerical methods. Boundary conditions, such as of Dirichlet type, can be posed in multiple non-equivalent forms and comparison principles may rely on a particular form to ensure that they hold.

In this work we present a finite element method to solve time-dependent Isaacs equations with isotropic, possibly degenerate diffusions. We show the convergence to the viscosity solution on Lipschitz domains. Dirichlet boundary conditions are imposed in the viscosity or in the strong sense, or in a combination thereof. The presented analysis is a generalisation of \cite{max_SIAM}, where Bellman equations with homogeneous, strong Dirichlet boundary conditions were approximated.

Compared to the literature for typical quasilinear differential equations, the scientific discourse on the numerical approximation of second-order Isaacs equations consists of a small number of papers. A two-scale finite element method for Isaacs equations was proposed in \cite{isaacs_fem}, where Alexandrov--Bakelman--Pucci estimates are used to prove the convergence with rates of numerical approximations to solutions of uniformly elliptic Isaacs equations on convex domains. Such two-scale finite element methods are closely related to semi-Lagrangian schemes \cite{Li:2018kc}, for which convergence rates for Isaacs operators on unbounded domains were given in \cite{isaacs_sl}, see also the series of works \cite{Jakobsen:2004bl}, \cite{Jakobsen:2006wr}, \cite{Jakobsen:2002kv}, \cite{Jakobsen:2005ci} as well as \cite{Maroso:2006ep} in this context. Finite difference methods for uniformly parabolic Isaacs equations on smooth bounded domains with Dirichlet boundary conditions were shown to converge with rates in \cite{Krylov:2015by} and \cite{Turanova:2015cl}. Deterministic two-player zero-sum games lead to first-order Isaacs equations. We refer for the numerical approximation of deterministic problems to \cite{Biswas:2017ud}, \cite{Botkin:2011fe}, \cite{two_player_numerical}, \cite{isaacs_fv} and the references therein. An important field of application is $H^\infty$ control, for which we highlight \cite{H_inf_num1}, \cite{Ferreira:2010kl}, \cite{H_inf_num2}, \cite{H_inf_isaacs}. 

The main contributions of the paper are the following:

{\em Convergence:} We present a finite element method which is guaranteed to converge to the viscosity solution of the Isaacs equation, including in the degenerate case. Domains only need to be Lipschitz regular.

{\em Boundary conditions:} Dirichlet boundary conditions are treated in the viscosity and strong sense, so that the numerical analysis can follow the requirements arising from the construction of comparison principles.

{\em Barrier functions:} Barrier functions are used to ensure that upper and lower semi-continuous envelopes of the numerical solutions locally satisfy the boundary conditions in the strong sense. The construction of barrier functions is characterized in the degenerate setting and on non-convex domains.

{\em Stability:} Boundary data is mapped onto the approximation space with elliptic projections in order to avoid instabilities, which may arise for instance in the presence of re-entrant corners of the domain.

This paper is organised as follows. In Section \ref{sec:definitions} we specify the class of problems under consideration, introduce a notion of viscosity solution and discuss the two types of Dirichlet boundary conditions. In Section \ref{sec:numerical} we define the numerical scheme. In Section \ref{sec:monotonicity} we establish monotonicity properties; in Section \ref{sec:alg} we show the existence and uniqueness of the numerical solutions; in Section \ref{sec:stablity} we ensure stability; in Section \ref{sec:subsupersolution} we verify that the upper and lower semi-continuous envelopes of numerical solutions are sub- and supersolutions, respectively. In Section \ref{sec:boundary} we return to the different notions of the boundary conditions. In particular, we introduce barrier functions which ensure that boundary conditions can be satisfied in the strong sense at least on a part of the boundary. With that we can state the convergence result in Section \ref{sec:convergence}. In Section \ref{sec:construct_barrier} we show the construction of barrier functions in uniformly parabolic as well as in degenerately parabolic cases. Finally, in Section \ref{sec:experiments} we conclude with numerical experiments verifying the convergence of the scheme and we show an application to a two-player stochastic game.

%%%%%%%%%%%%%%%%%%%%%%%%%%%%%%%%%%%%%%%%%%%%%%%%%%%%%%%%%%%%%%%%%%%%%%%%
\section{Isotropic Isaacs equations}
\label{sec:definitions}
%%%%%%%%%%%%%%%%%%%%%%%%%%%%%%%%%%%%%%%%%%%%%%%%%%%%%%%%%%%%%%%%%%%%%%%%
We consider Isaacs equations on bounded Lipschitz domains $\O \in \R^d$ with $d \geq 2$. Let $A$ and $B$ be compact metric spaces and $(\a, \b) \in A \times B$. We introduce the linear operators
\begin{equation}\label{eq:linop}
L^{(\a, \b)} : \; H^2(\O) \to L^2(\O), \; w \mapsto - a^{(\a,\b)} \, \Delta w - b^{(\a,\b)} \cdot \nabla w + c^{(\a,\b)} \, w
\end{equation}
and data terms $f^ {(\a,\b)}, v_T \in C(\oO)$, $g \in \{ w \in W^{1,\infty}(\R^d) : \Delta w \in L^\infty(\O) \}$. The mapping
\begin{align*}
 &A \times B \to C(\oO) \times C(\oO, \R^d) \times C(\oO) \times C(\oO),\\ &(\a,\b) \mapsto (a^{ (\alpha,\beta)},  b^ {(\a,\b)}, c^ {(\a,\b)}, f^ {(\a,\b)})
\end{align*}
is assumed to be continuous such that the families of functions $\{ a^{ (\alpha,\beta)} \}_{(\a,\b) \in A \times B}$, $\{ b^{ (\alpha,\beta)} \}_{(\a,\b) \in A \times B}$, $\{ c^{ (\alpha,\beta)} \}_{(\a,\b) \in A \times B}$ and $\{ f^{ (\alpha,\beta)} \}_{(\a,\b) \in A \times B}$ are equicontinuous. Moreover, $a^{(\a,\b)}(x) \geq 0$ for any $(\a, \b) \in A \times B$ and $x \in \O$. It follows that all $L^{\a,\b}$ are degenerate elliptic and that
\begin{equation*}
\sup_{(\a,\b) \in A \times B} \| \, (a^{(\a,\b)},  b^{(\a,\b)}, c^{(\a,\b)}, f^{(\a,\b)}) \, \|_{C(\oO) \times C(\oO, \R^d) \times C(\oO) \times C(\oO)} < \infty.%, \\
\end{equation*}
For any sufficiently smooth $w$ we define the Hamiltonian
\[
H w := \adjustlimits \inf_{\b\in B} \sup_{\a\in A} (L^{(\a,\b)} w - f^{(\a,\b)}),
\]
assuming that the supremum and the infimum are applied pointwise. We wish to study the Isaacs problems of the following form:
\begin{subequations}
\label{eq:isaacsibvp}
\begin{alignat}{2}
-\p_t v + H v &=0 	&	&\qquad\text{in }(0,T)\times\O, \label{eq:isaacspde}\\
v&=g 		  	&	&\qquad\text{on }(0,T)\times\pO, \label{eq:isaacsDiri}\\
v&=v_T		  	&	&\qquad\text{on }\{T\}\times\oO,
\end{alignat}
\end{subequations}
where $v_T \in L^\infty(\O)$ and $v_T|_{\pO} = g|_{\pO}$. Even though only the values of $g|_{\pO}$ appear in \eqref{eq:isaacsibvp} and  $v_T|_{\pO} = g|_{\pO}$ it remains valuable to distinguish $g$ and $v_T$: While $v_T$ is only assumed to be uniformly bounded, we require that $g$ can be written as the trace of a function from the more regular space $\{ w \in W^{1,\infty}(\R^d) : \Delta w \in L^\infty(\O) \}$.

Typically there does not exist a smooth solution allowing a classical interpretation of \eqref{eq:isaacsibvp}. Our aim is to construct a finite element method which approximates the viscosity solution of the Isaacs problem \eqref{eq:isaacsibvp}. We now formalise what is meant by a viscosity solution throughout this chapter. Firstly, let us reformulate \eqref{eq:isaacsibvp} as $F=0$ where $F$ is the differential operator
\begin{align*} \label{def:F}
&F(t,x,q,p,r,s) = \nonumber \\ & \left\{ 
\begin{array}{rllll}
\!\!\! - r + {\displaystyle \inf_{\b}\sup_{\a}} \bigl(- a^{(\a,\b)}(x) \, q - b^{(\a,\b)}(x) \!\cdot\! p + c^{(\a,\b)}(x) \, s- f^{(\a, \b)}(x)\bigr) \!&: x \in \O, t < T,\\
s - g(x) \!&: x \in \pO, t < T,\\[2mm]
s - v_T(x) \!&: t = T.
\end{array}
\right.
\end{align*}
Given a bounded function $v: [0,T] \times \oO \to \R$ we define its upper and lower semi-continuous envelopes, respectively, as 
\[
v^*(t,x) := \limsup_{\substack{(s,y) \to (t,x) \\ (s,y) \in [0,T] \times \oO}} v(s,y)
\]
and 
\[
v_*(t,x) := \liminf_{\substack{(s,y) \to (t,x) \\ (s,y) \in [0,T] \times \oO}} v(s,y).
\]
We analogously extend the definition of lower- and upper semicontinuous envelopes to $F$.

Definition \ref{def:vissol_isaacs} below imposes Dirichlet boundary conditions in the viscosity sense on all of $\pO$ in the flavour of the original Barles-Souganidis proof, see also \cite{barles_souganidis,Jensen:2018wn}. For boundary value problems arising from concrete applications, such viscosity boundary conditions may not be strong enough to establish a comparison principle. We therefore introduce a subset $\omega \subset \pO$ on which boundary conditions hold in a stronger sense.

\begin{definition} \label{def:strongBC}
Let $g : \pO \to \R$ and $v_T : \oO \to \R$ be bounded functions. We say that a bounded function $v : [0,T] \times \oO \to \R$ satisfies the Dirichlet boundary conditions $v(t,x) = g(x)$ at $x \in \pO$ {\em strongly} if, for all $t \in [0,T]$,
\[
v^*(t,x) = g(x) \qquad \text{and} \qquad v_*(t,x) = g(x).
\]
Similarly, we say that $v$ satisfies the final time conditions at $x \in \oO$ {\em strongly} if
\[
v^*(T,x) = v_T(x) \qquad \text{and} \qquad v_*(T,x) = v_T(x).
\]
\end{definition}

Depending on the setting $\omega = \emptyset$, $\omega = \pO$ or $\emptyset \subsetneqq \omega \subsetneqq \pO$ may be most appropriate choices.

\begin{definition}\label{def:vissol_isaacs}
A bounded function $v$ is a viscosity supersolution (respectively, subsolution) of \eqref{eq:isaacsibvp} strongly on $\omega \subset \pO$ if, for any test function $\psi \in C^2(\R \times \R^d)$,
\begin{align} \label{eq:vissupsol_isaacs}
F^*(t, x, \Delta \psi(t,x), \nabla \psi(t,x), \partial_t \psi(t,x), v_*(t,x)) \geq 0,
\end{align}
(respectively, 
\begin{align} \label{eq:vissubsol_isaacs}
\left. F_*(t, x, \Delta \psi(t,x), \nabla \psi(t,x), \partial_t \psi(t,x), v^*(t,x)) \leq 0, \right)
\end{align}
provided that $v_{*}-\psi$ attains at $(t,x) \in [0,T) \times \oO$ a local minimum relative to $[0,T) \times \oO$ (respectively, $v^{*}-\psi$ attains a local maximum) and additionally $v_*$ (respectively $v^{*}$) satisfies the final time conditions and the boundary conditions on $\omega$ in the strong sense.
A function which is simultaneously a viscosity super- and subsolution of \eqref{eq:isaacsibvp} is called a viscosity solution.
\end{definition}

The motivation for incorporating strong boundary conditions in the definition of viscosity solutions can be illustrated with a simple example.

\begin{example}
We investigate on $\O = (-1,1)$ the Bellman equation
\begin{align} \label{eq:eikonal}
\sup_{\beta \in \R} \beta \cdot v'(x) - \beta^2 / 4 - (x^2 - x^4)^2 = 0
\end{align}
with homogeneous Dirichlet boundary conditions. Identifying the structure of a Fen\-chel conjugate in the $\beta$-dependent terms, we can rewrite the PDE as $| v'(x) |^2 -(x^2 - x^4)^2 = 0$. Taking the square root, we may simplify the expression to the eikonal equation $| v'(x) | = x^2 - x^4$ with a double-well potential on the right-hand side.

An elementary calculation shows that
\[
v(x) = \frac{|x|^5}{5} - \frac{|x|^3}{3} + \frac{2}{15}
\]
is twice continuously differentiable, solves \eqref{eq:eikonal} classically and satisfies the boundary conditions strongly.

For $c \leq 0$ we define
\[
v_c : \; \oO, \; x \mapsto \begin{cases} 
v(x) \quad & : x \in \O,\\
c \quad & : x \in \pO.
\end{cases}
\]
We initially assume that $\omega$ of Definition \ref{def:vissol_isaacs} is equal to the empty set and show that $v_c$ is in that case a viscosity solution for all $c \leq 0$. In particular the viscosity solution is then not unique as required by the construction in~\cite{barles_souganidis} and there cannot exists a corresponding comparison principle.

We observe that $(v_c)^* = v$ on $\oO$, whereas $(v_c)_* = v_c$ since $v_c$ is lower semi-continuous. Thus it is clear that $v_c$ is a viscosity subsolution of the problem.

We now prove that $v_c$ is also a supersolution. We must show that for all $\psi \in C^2$ such that $(v_c)_* - \psi$ has a local minimum $x$ we have as in \cite[equation (7.10)]{Crandall:1992ta}
\begin{subequations}
\begin{align} \label{eq:eika}
    \tst \sup_\beta \beta \cdot \psi'(x) - \beta^2 / 4 - (x^2 - x^4)^2 \phantom{\}} & \geq 0, && x \in \O,\\ 
    \max\{ (v_c)_*, \tst \sup_\beta \beta \cdot \psi'(x) - \beta^2 / 4 - (x^2 - x^4)^2 \} & \geq 0, && x \in \pO. \label{eq:eikb}
\end{align}
\end{subequations}
Inequality \eqref{eq:eika} holds because $v_c$ is a classical solution on $\O$. Remembering that $x^2 - x^4$ vanishes on $\pO$ and the simplification to the eikonal form, the boundary case \eqref{eq:eikb} can be simplified to 
\begin{align*}
    \max\{ c, | \psi'(x) |^2 \} \geq 0, \qquad x \in \pO,
\end{align*}
which holds trivially. 

Instead, setting $\omega = \pO$ we enforce $(v_c)_* = v_c = 0$ on the boundary, which filters out all the spurious solutions with $c < 0$ and uniqueness is regained. 
\end{example}

Similar examples can also be constructed with second-order equations arising from diffusive problems and in higher dimensions, e.g.~see \cite[Section 2]{Jensen:2018wn}. While the loss of uniqueness of solutions directly interacts with the ability to formulate comparison principles, the flexibility of viscosity solutions to break continuity in the vicinity of the boundary can be helpful in some circumstances, e.g.~when examining singularly perturbed diffusion-advection equations which exhibit a boundary layers for small diffusion coefficients $\eps > 0$ and may fully detach from the Dirichlet data when $\eps \to 0$.

We conclude this section by relating our definition of strong boundary conditions to that in \cite[Section 7.A]{Crandall:1992ta}. There an upper semi-continuous function $v : \oO \to \R$ is a subsolution of the boundary conditions $B(x, v(x), D u(x), D^2 u(x)) = 0$ at $x \in \pO$ if
\begin{align} \label{eq:contBCsub}
B(x, v(x), p, X) \le 0 \quad \text{for} \quad (p, X) \in \overline{J}_{\oO}^{2,+} v(x).
\end{align}
Similarly, a lower semi-continuous $v : \oO \to \R$ is a subsolution of the boundary conditions at $x \in \pO$ if
\begin{align} \label{eq:contBCsup}
B(x, v(x), p, X) \ge 0 \quad \text{for} \quad (p, X) \in \overline{J}_{\oO}^{2,-} v(x).
\end{align}
In our case $B(x, q, p, s) = s - g(x)$ and we refer for the definition of the closed second-order jets $\overline{J}_{\oO}^{2,+} u(x)$, $\overline{J}_{\oO}^{2,-} u(x)$ to \cite{Crandall:1992ta}.

Comparing \eqref{eq:contBCsub} and \eqref{eq:contBCsup} to Definitions \ref{def:strongBC} and \ref{def:vissol_isaacs}, we first of all make an observation which is not directly related to the boundary conditions: we adopt in this paper the notion of bounded, but not necessarily semi-continuous sub- and supersolutions, following \cite{Barles:1988cj} and \cite{barles_souganidis}, see also \cite[Chapter VII]{Fleming:2006tl}. Hence, to bridge this difference, we can adapt the definition of \cite{Crandall:1992ta} to
\begin{align} \label{eq:discontBCsub}
B(x, v^*(x), p, X) \le 0 \quad \text{for} \quad (p, X) \in \overline{J}_{\oO}^{2,+} v(x)
\end{align}
and
\begin{align} \label{eq:discontBCsup}
B(x, v_*(x), p, X) \ge 0 \quad \text{for} \quad (p, X) \in \overline{J}_{\oO}^{2,-} v(x).
\end{align}
Now if the closed second-order jets of $v$ are non-empty, then \eqref{eq:discontBCsub} implies $v^*(x) \le g(x)$ and \eqref{eq:discontBCsup} implies $v_*(x) \ge g(x)$. Since $v^* \ge v_*$ by construction, we arrive at $v^*(x) = v_*(x) = g(x)$, mirroring Definition \ref{def:strongBC}.

%%%%%%%%%%%%%%%%%%%%%%%%%%%%%%%%%%%%%%%%%%%%%%%%%%%%%%%%%%%%%%%%%%%%%%%%
\section{The numerical method}
\label{sec:numerical}
%%%%%%%%%%%%%%%%%%%%%%%%%%%%%%%%%%%%%%%%%%%%%%%%%%%%%%%%%%%%%%%%%%%%%%%%
We consider a sequence $V_i, i \in \N$, of piecewise linear shape-regular finite element spaces. Let us denote the nodes of the finite element mesh by $\il y$ where $\ell$ is the index over the set of nodes. The associated hat functions are denoted $\il \phi$, i.e.~$\il \phi \in V_i$ such that $ \phi_{i}^{\ell}(y_{i}^{\ell}) = 1$ while  $\il \phi (y_{i}^{s}) = 0$ for all $\ell \neq s$. Set $\hil \phi := \il \phi / \| \il \phi \|_{L^1(\O)}$. Therefore, the $\il \phi$ are normalised in the $L^\infty$ norm whilst the $\hil \phi$ are normalised in the $L^1$ norm. 

Let $V_i^0\subset V_i$ be the subspace of functions which satisfy the homogeneous Dirichlet conditions on $\pO$. Throughout the text we let the index $\ell$  range over the boundary nodes first, in other words, $\il y \in \O$ for $\ell \leq N_i := \dim V_i^0$. 

In order to construct a finite element method which is both stable and consistent, care needs to be taken when imposing {\em non-homogeneous} boundary conditions. Due to the more delicate stability properties of Isaacs operators compared to Bellman operators, on non-convex domains a nodal interpolant is not suitable to map the boundary data onto the approximation space. 

Given $w \in C(H^1(\O))$, we consider therefore linear mappings $\P$ which map $w$ into $V_i$ such that, for all $\hil \phi \in V_i^0$,
\begin{equation}\label{eq:isaacs_ellprojdef}
\langle \nabla \P w, \nabla \hil \phi \rangle = \langle \nabla w, \nabla \hil \phi \rangle.
\end{equation}
\begin{assumption}\label{ass:isaacs_ellproj}
There are linear mappings $\P$ satisfying \eqref{eq:isaacs_ellprojdef} and there is a constant $C\geq 0$ such that for every $ w \in C^{\infty}(\R^{d})$ and $i\in\N$,
\begin{equation}\label{ass:isaacs_ellprojstab}
\norm{\P w}_{W^{1,\infty}(\O)} \leq C \norm{w}_{W^{1,\infty}(\O)}
\quad\text{and}\quad
\lim_{i\tends \infty} \norm{\P w - w}_{W^{1,\infty}(\O)}=0.
\end{equation}
\end{assumption}
It is shown in \cite{approx_prop_fem} that Assumption \ref{ass:isaacs_ellproj} holds if we chose $\P w$ such that it interpolates $w$ on the boundary, \eqref{ass:isaacs_ellproj} is satisfied, $\O$ is a bounded {\em convex polyhedral} domain in $\R^d$, $d \in \{2,3\}$, and the mesh satisfies a local quasi-uniformity condition.

To apply the result for non-convex domains $\O$ and general $w\in C^{\infty}(\R^d)$, we consider a convex polyhedral domain $B$ containing $\O$  and assume there is a locally quasi-uniform mesh on $B$ which coincides with the original mesh on $\O$. Let $\eta$ be a smooth cut-off function with relatively compact support in $B$ such that $\eta \equiv 1$ on $\O$. Then the classical elliptic projection on $B$, acting on $\eta w : B \to \R$, has the required properties. Given this construction for $\P$, it is natural to refer to it as an elliptic projection, see also \cite[Section 4]{max_SIAM}. This construction provides on non-convex domains an approximation of the boundary data, which permits the proof of stability of numerical solutions, see Theorem \ref{thm:num_stable} below, as well as of consistency, see Lemma \ref{lem:ellconsistency}. In conclusion, let $V_i^g\subset V_i$ be the subspace of functions which attain $\P g$ on the boundary. 

The mesh size, i.e. the largest diameter of an element, is denoted $\Delta x_i$. It is assumed that $\Delta x_i \to 0$ as $i \to \infty$. The uniform time step size is denoted $h_i$ with the constraint that $T / h_i \in \N$.  It is assumed that $h_i \to 0$ as $i \to \infty$. Let $\ik s$ be the $k$th time step at the refinement level~$i$. Then the set of time steps is $S_i := \bigl\{ \ik s : k = T/h_i, \dots, 0 \bigr\}$. 

The time derivative is approximated by $d_i$ for which we let the $\ell$th entry of $d_i w( \ik s, \cdot)$ be
\[
(d_i w( \ik s, \cdot))_\ell = \frac{w( \iko s, \il y) - w(\ik s, \il y)}{h_i}.
\]
For the discretisation of $L^{(\a,\b)}$ we allow a splitting into an explicit and an implicit part. For each pair $(\alpha, \b)$ and for each $i$, we introduce the explicit operator $\iab E$ and the implicit operator $\iab I$ such that $L^{(\a,\b)} \approx \iab E + \iab I$ and 
\begin{align*}
\iab E &: \; H^2(\O) \to L^2(\O), \; w \mapsto - \biab a \, \Delta w - \biab b \cdot \nabla w + \biab c \, w,\\
\iab I &: \; H^2(\O) \to L^2(\O), \; w \mapsto - \bbiab a \, \Delta w - \bbiab b \cdot \nabla w + \bbiab c \, w.
\end{align*}
For each $i$ we also seek a discretisation $\iab f$ of $f^{(\a,\b)}$. We now want to make the conceptual statements $L^{(\a,\b)} \approx \iab E + \iab I$ and $f^{(a,b)} \approx \iab f$ more precise.

\begin{assumption} \label{ass:cons}
The splitting is consistent in the sense that
\begin{align*}
\lim_{i \to \infty} \sup_{(\a,\b)\in A \times B} &\bigl( \sup_{0 \le \ell \le N_i} \bigl\| a^{(\a,\b)} - \bigl( \biab a(\il y) +  \bbiab a(\il y) \bigr) \bigr\|_{L^\infty({\rm supp} \, \hil \phi)} \\
& + \bigl\| b^{(\a,\b)} - \bigl( \biab b + \bbiab b \bigr) \bigr\|_{L^\infty(\O,\R^d)} \\
&+   \bigl\| c^{(\a,\b)} - \bigl( \biab c + \bbiab c \bigr) \bigr\|_{L^\infty(\O)} + \bigl\| f^{(\a,\b)} - \iab f \bigr\|_{L^\infty(\O)} \bigr) = 0.
\end{align*}
The coefficients $\biab c$ and $\bbiab c$ are non-negative and that there exists $\gamma \in \R$ such that
\begin{align}\label{react}
 \| \biab c \|_{L^\infty} + \| \bbiab c \|_{L^\infty} \leq \gamma , \qquad \forall\,i\in\N,\;\forall\,(\a,\b)\in A \times B.
\end{align}
The family of mappings
\begin{align*} 
\begin{array}{ll}
A \times B \to L^\infty, \; (\a, \b) \mapsto (\biab a, \biab b, \biab c, \bbiab a, \bbiab b, \bbiab c,f_i^{(\a, \b)})
\end{array} \end{align*}
is continuous for each $i$.
\end{assumption}

The splitting into explicit and implicit part is used to define the internal numerical operators $\siabo E$ and $\siabo I$ as mappings from $H^1(\O)$ to $\R^{\dim V_i}$:
\begin{subequations}\label{eq:discreteop_int}
\begin{align}
(\siabo E w)_\ell & := \biab  a(\il y) \langle \nabla w, \nabla \hil \phi \rangle + \langle -\biab  b \cdot \nabla w + \biab  c \, w, \hil \phi \rangle,\\
(\siabo I w)_{\ell} & := \bbiab a(\il y) \langle \nabla w, \nabla \hil \phi \rangle + \langle -\bbiab b \cdot \nabla w + \bbiab c w, \hil \phi \rangle, \label{sch:imp}\\
(\siabo F)_\ell & := \langle \iab f, \hil \phi \rangle,
\end{align}
\end{subequations}
where $\ell$ ranges over all internal nodes,~i.e. $\ell \leq N_i$. For boundary nodes $\ell > N_i$ we set $(\siabo E w)_\ell = (\siabo I w)_\ell = (\siabo F)_\ell = 0$. Likewise we define the boundary numerical operators $\siabr E$ and $\siabr I$ from $H^1(\O)$ to $\R^{\dim V_i}$:
\begin{subequations}\label{eq:discreteop_bound}
\begin{align}
(\siabr E w)_\ell & := 0\\
(\siabr I w)_{\ell} & := w(\il y)\\
(\siabr F)_\ell & := (\P g)_\ell,
\end{align}
\end{subequations}
where $\ell$ ranges over all boundary nodes,~i.e. $\ell > N_i$. For internal nodes $\ell \leq N_i$ we set $(\siabr E w)_\ell = (\siabr I w)_\ell = (\siabr F)_\ell = 0$. Finally, we obtain the numerical operators $\siab E$ and $\siab I$ by adding the internal and the boundary operators, i.e. $\siab E := \siabo E + \siabr E = \siabo E$, $\siab I := \siabo I + \siabr I$. Similarly, $\siab F := \siabo F + \siabr F$.

When restricted to the domain $V_i$, the numerical operators have matrix representations with respect to the nodal bases $\{ \il \phi \}_\ell$, which we also denote by $\siab E$ and $\siab I$.

Throughout the paper, we make use of the partial ordering of $\RN$: for $x,\,y\in \R^n$, we write $ x\geq y$ if and only if $ x_{\ell} \geq y_{\ell}$ for all $ \ell\in\left\{1,\dots,n\right\}$. For collections $\left\{x^{\a}\right\}_{\a}\subset\R^n$ and $\left\{y^{\b}\right\}_{\b}\subset\R^n$, we define the operators $\sup_{\a}$ and $\inf_{\b}$ componentwise: 
\[
\bigl(\sup_{\a} x^{\a}\bigr)_{\ell}=\sup_{\a}x^{\a}_{\ell} \quad \textrm{and} \quad \bigl(\inf_{\b} y^{\b}\bigr)_{\ell}=\inf_{\b}y^{\b}_{\ell}.
\]
We can now state the numerical scheme for finding an approximate solution to \eqref{eq:isaacsibvp}. We initialise the scheme with the elliptic projection $v_i(T, \cdot) = \P v_T$. Then, in order to find the numerical solution $v_i (\ik s, \cdot) \in V^g_{i}$, we proceed inductively over the remaining time steps $k \in \left\{T/h_i-1, \dots, 0\right\}$:
\begin{align} \label{num}
- d_i v_i(\ik s,\cdot) + \adjustlimits \inf_{\beta \in B} \sup_{\alpha\in A} \bigl( \siab E v_i(\iko s,\cdot) + \siab I v_i(\ik s,\cdot) - \siab F \bigr) = 0.
\end{align}
If all $\siab I$ vanish then \eqref{num} is an explicit scheme, otherwise it is implicit. Note that the terms $d_i v_i$ representing the discretized time derivative vanish on the boundary nodes because $g$ is time independent.

To show the existence and uniqueness of numerical solutions, it is useful to provide an equivalent reformulation of the scheme. Let $w : S_i \times \Omega \to \R$ be a function that satisfies $w(\ik s, \cdot) \in H^1(\O)$ for all $\ik s \in S_i$. Given $\beta$ we denote by $\a_i^{k, \ell, \beta}(w)$ a maximiser of \eqref{eq:maximizer} below. Similarly $\b_i^{k, \ell}(w)$ is a minimiser of \eqref{eq:minimizer}:
\begin{subequations} \label{eq:minmaxcontrols}
\begin{align} \label{eq:maximizer}
& \sup_{\alpha\in A} \left( \siab E w(\iko s,\cdot) + \siab I w(\ik s,\cdot) - \siab F \right)_{\ell},\\
& \inf_{\beta \in B} \left( {\sf E}_i^{(\alpha_i^{k, \ell,\beta}(w),\beta)} w(\iko s,\cdot) + {\sf I}_i^{(\alpha_i^{k, \ell,\beta}(w),\beta)} w(\ik s,\cdot) - {\sf F}_i^{(\alpha_i^{k, \ell,\beta}(w),\beta)} \right)_{\ell}. \label{eq:minimizer}
\end{align}
\end{subequations}
Finally, we write $\a_i^{k, \ell}(w) = \a_i^{k, \ell, \b_i^{k, \ell}(w)}(w)$, i.e.~drop the reference to $\beta$ to imply that $\beta$ is chosen optimally.

Let $\siwk I$ and $\siwk E$ be the matrices whose $\ell$th row at $k$th time step is equal to that of
\[
\si I^{(\alpha_i^{k, \ell}(w), \beta_i^{k, \ell}(w))} \quad \mbox{and} \quad \si E^{(\alpha_i^{k, \ell}(w), \beta_i^{k, \ell}(w))},
\]
respectively. Also let the $\ell$th entry of $\siwk F$ be
\[
\left(\si F^{(\alpha_i^{k, \ell}(w), \beta_i^{k, \ell}(w))}\right)_\ell.
\]
Notice that control $(\alpha_i^{k, \ell}(w), \beta_i^{k, \ell}(w))$ is not necessarily unique. The subsequent analysis is valid regardless of the choice of the control.

We may reformulate the numerical scheme \eqref{num} with $\siwk E$, $\siwk I$ and $\siwk F$ as follows. Initialise the scheme with the elliptic projection so that $v_i(T, \cdot) = P_i v_T$. Then for each $k \in \{T/h_i-1, \dots, 1,0\}$, $v_i \in V_i^g$ solve

\begin{equation}\label{numsol}
(h_i \si I^{k,v_i} + \Id)\, v_i(\ik s,\cdot) + (h_i \si E^{k,v_i} - \Id)\, v_i(\iko s,\cdot) - h_i \si F^{k,v_i} = 0.
\end{equation}

%%%%%%%%%%%%%%%%%%%%%%%%%%%%%%%%%%%%%%%%%%%%%%%%%%%%%%%%%%%%%%%%%%%%%%%%
\subsection{Consistency properties of elliptic projections} 
\label{sec:consistency}
%%%%%%%%%%%%%%%%%%%%%%%%%%%%%%%%%%%%%%%%%%%%%%%%%%%%%%%%%%%%%%%%%%%%%%%%

We conclude the section with the consistency properties of linear operators for fixed $(\a,\b) \in A \times B$. The result extends \cite{max_SIAM} even in the Bellman setting by including non-homogenous boundary conditions.
To state consistency it is convenient to abbreviate the operator of the numerical scheme as
\begin{align} \label{eq:discreteF}
F_i \, w(\ik s, \il y) := (h_i \si I^{k,w} + \Id)\, w(\ik s,\cdot) + (h_i \si E^{k,w} - \Id)\, w(\iko s,\cdot) - h_i \si F^{k,w}
\end{align}
for $w(\ik s, \cdot) \in V^{g}_i$. Note that despite of the similar notation, the operator $\si F^{k,w}$ has a quite different meaning in that it denotes the forcing term vector, as defined in the first part of this section.

\begin{lemma}\label{lem:ellconsistency}
Let $\psi \in C^2(\R \times \R^d)$, $s_i^{k(i)} \to t \in [0,T)$ and $y_i^{\ell(i)} \to x \in \oO$ as $i\to \infty$. Here $s_i^{k(i)}$ is a time step and $y_i^{\ell(i)}$ a node of the $i$-th refinement. Then
\begin{align} \label{eq:consistency_limsup}
\limsup_{i \to \infty} F_i P_i \psi(s_i^{k(i)}, y_i^{\ell(i)}) \le F^*(t,x, \Delta \psi(t,x), \nabla \psi(t,x), \partial_t \psi(t,x), \psi(t,x))
\end{align}
and
\begin{align} \label{eq:consistency_liminf}
\liminf_{i \to \infty} F_i P_i \psi(s_i^{k(i)}, y_i^{\ell(i)}) \ge F_*(t,x, \Delta \psi(t,x), \nabla \psi(t,x), \partial_t \psi(t,x), \psi(t,x)).
\end{align}
\end{lemma}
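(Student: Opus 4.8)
The plan is to split the discrete operator $F_i P_i \psi$ at the node $y_i^{\ell(i)}$ according to whether $x = \lim y_i^{\ell(i)}$ lies in $\O$, on $\pO$, or whether $t = T$, matching the three branches of $F$, and to treat the interior case first since the boundary and final-time cases are comparatively soft. In the interior case I would expand
\[
F_i P_i \psi(\ik s, \il y) = - d_i P_i\psi(\ik s,\il y) + \adjustlimits\inf_{\b}\sup_{\a}\bigl( \siab E P_i\psi(\iko s,\cdot) + \siab I P_i\psi(\ik s,\cdot) - \siab F \bigr)_{\ell},
\]
(after dividing \eqref{eq:discreteF} by $h_i$, which is the form in which \eqref{num} and \eqref{eq:consistency_limsup} are comparable). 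The term $d_i P_i \psi$ converges to $\p_t \psi(t,x)$ by smoothness of $\psi$ together with $\|P_i\psi - \psi\|_{W^{1,\infty}} \to 0$ from Assumption~\ref{ass:isaacs_ellproj}; since $d_i$ only sees nodal values and $\psi$ is $C^2$ in time, one gets $d_i P_i\psi(\ik s, \il y) = d_i \psi(\ik s,\il y) + o(1) \to \p_t\psi(t,x)$. For the spatial part, the key observation is that $\hil\phi$ is the $L^1$-normalised hat function, so $\langle \cdot, \hil\phi\rangle$ is an averaging over $\supp\hil\phi$, whose diameter tends to $0$; hence $\langle -\biab b\cdot\nabla P_i\psi + \biab c\, P_i\psi, \hil\phi\rangle \to -b^{(\a,\b)}(x)\cdot\nabla\psi(t,x) + c^{(\a,\b)}(x)\psi(t,x)$ and likewise for the implicit part, using the consistency of the coefficient splitting (Assumption~\ref{ass:cons}) and their equicontinuity, uniformly in $(\a,\b)$. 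The genuinely delicate term is $\biab a(\il y)\langle \nabla P_i\psi(\iko s,\cdot), \nabla\hil\phi\rangle$ (and its implicit analogue): here one cannot simply pass $\nabla\psi$ out of the bracket, because $\langle \nabla\psi, \nabla\hil\phi\rangle$ need not approximate $\Delta\psi$ on a general unstructured mesh — this is precisely the "violation of the consistency conditions" flagged in the abstract.

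The resolution — and the main obstacle — is to exploit the defining property \eqref{eq:isaacs_ellprojdef} of $P_i$: since $\langle\nabla P_i\psi, \nabla\hil\phi\rangle = \langle \nabla\psi, \nabla\hil\phi\rangle$ for all interior $\ell$, and since by Green's identity $\langle \nabla\psi, \nabla\hil\phi\rangle = -\langle \Delta\psi, \hil\phi\rangle + (\text{boundary term})$, with the boundary term vanishing because $\hil\phi$ has compact support in $\O$ for interior nodes whose support does not meet $\pO$. Thus for interior $x$, eventually $\supp\hil\phi \subset\subset\O$, and $\biab a(\il y)\langle \nabla P_i\psi,\nabla\hil\phi\rangle = -\biab a(\il y)\langle\Delta\psi(\iko s,\cdot),\hil\phi\rangle \to -a_1^{(\a,\b)}(x)\Delta\psi(t,x)$ where $a_1$ is the explicit part of the split diffusion; adding the implicit contribution and using $\biab a + \bbiab a \to a^{(\a,\b)}$ uniformly gives $-a^{(\a,\b)}(x)\Delta\psi(t,x)$. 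One must check this convergence is uniform in $(\a,\b)\in A\times B$ so that it survives the $\adjustlimits\inf_\b\sup_\a$; this follows from the equicontinuity hypotheses on $\{a^{(\a,\b)}\}$, $\{b^{(\a,\b)}\}$, $\{c^{(\a,\b)}\}$, $\{f^{(\a,\b)}\}$ together with the uniform bound \eqref{react}, plus the uniform-in-$(\a,\b)$ consistency of Assumption~\ref{ass:cons}, so that the $\inf\sup$ of the discrete expressions converges to the $\inf\sup$ of the limit (continuity of $\inf\sup$ under uniform convergence). Combining, for interior $x$ and $t<T$ the limit of $F_i P_i\psi$ is exactly $F(t,x,\Delta\psi,\nabla\psi,\p_t\psi,\psi(t,x))$, which is continuous there, so $F^* = F_* = F$ and both \eqref{eq:consistency_limsup} and \eqref{eq:consistency_liminf} hold with equality.

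It remains to handle $x\in\pO$ with $t<T$, and $t=T$. For a boundary node, $(\siabr I P_i\psi)_\ell = P_i\psi(\il y)$ and $(\siabr F)_\ell = (P_i g)_\ell$, while $d_i P_i\psi$ vanishes on boundary nodes; when $\il y$ is exactly a boundary node the scheme row reads $P_i\psi(\il y) - (P_i g)(\il y)$, which by \eqref{ass:isaacs_ellprojstab} converges to $\psi(t,x) - g(x)$, matching the middle branch $s - g(x)$ of $F$; the subtlety is when $\il y\to x\in\pO$ through interior nodes, in which case one uses the interior estimate above but now $\supp\hil\phi$ may touch $\pO$, and one argues that $F^*$ at a boundary point takes the max of the interior branch and $s-g(x)$ (via the upper envelope of the piecewise-defined $F$), so the one-sided inequalities \eqref{eq:consistency_limsup}, \eqref{eq:consistency_liminf} follow from comparing to whichever branch is attained in the limit — here I would invoke the standard fact that $F^*$ at $(t,x)$, $x\in\pO$, is the larger of the two relevant expressions and $F_*$ the smaller, so that a limit of interior-branch values lies between them. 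The final-time case $t=T$ is analogous and simpler: the scheme initialisation $v_i(T,\cdot) = P_i v_T$ together with \eqref{ass:isaacs_ellprojstab} applied to $\psi$ gives $F_i P_i\psi(s_i^{k(i)}, y_i^{\ell(i)}) \to \psi(T,x) - v_T(x)$ when $k(i) = T/h_i$, and for $k(i) < T/h_i$ with $\ik s \to T$ one again lands between the envelopes of $F$. I expect the bookkeeping of these boundary/corner cases — in particular confirming that the support of $\hil\phi$ at nodes approaching $\pO$ does not spoil the Green's-identity argument, and correctly identifying $F^*$, $F_*$ at the interface $\{t=T\}$ and on $\pO$ — to be the most technical part, whereas the interior estimate, modulo the elliptic-projection trick, is the conceptual heart.
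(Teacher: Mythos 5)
Your plan follows the paper's proof essentially verbatim: handle the time derivative by standard finite-difference estimates, treat interior nodes via the elliptic-projection identity $\langle\nabla P_i\psi,\nabla\hil\phi\rangle=\langle\nabla\psi,\nabla\hil\phi\rangle=-\langle\Delta\psi,\hil\phi\rangle$ combined with the $L^1$-normalisation of $\hil\phi$ and Assumption~\ref{ass:cons}, read off $\psi(t,x)-g(x)$ at boundary nodes from \eqref{eq:discreteop_bound} and \eqref{ass:isaacs_ellprojstab}, and then split mixed sequences into interior/boundary subsequences and compare against the envelopes of $F$. One small remark on motivation: the claim that ``$\langle\nabla\psi,\nabla\hil\phi\rangle$ need not approximate $\Delta\psi$ on a general unstructured mesh'' is not quite the right diagnosis — by Green's identity with $\hil\phi\in H^1_0(\O)$ this pairing \emph{always} equals $-\langle\Delta\psi,\hil\phi\rangle\to-\Delta\psi(t,x)$; the actual obstruction the elliptic projection removes is that for the nodal interpolant $\calI_i\psi$ one has $\langle\nabla\calI_i\psi,\nabla\hil\phi\rangle\ne\langle\nabla\psi,\nabla\hil\phi\rangle$ in general, so the stiffness-matrix action on a piecewise-linear representative of $\psi$ fails to be consistent, whereas \eqref{eq:isaacs_ellprojdef} restores this equality exactly.
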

\begin{proof}We only prove \eqref{eq:consistency_limsup} since the result for \eqref{eq:consistency_liminf} follows analogously. For ease of notation, the dependence of $k$ and $\ell$ on $i$ is made implicit.

{\em Step 1:} Standard finite difference bounds ensure that
\begin{equation}\label{eq:timederivconv}
\lim_{i\to \infty} d_i P_i \psi(\ik s, \il y) = \p_t \psi(t,x).
\end{equation}

{\em Step 2:} If $\il y \in \O$ then the proof is analogous to the Hamilton--Jacobi--Bellman case described in \cite{max_SIAM}, once the control $\a \in A$ is replaced by the pair of controls $(\a,\b) \in A \times B$. Both Assumptions \ref{ass:isaacs_ellproj} and \ref{ass:cons} are used here.

{\em Step 3:} Now suppose that $\il y \in \pO$. Then it follows from \eqref{ass:isaacs_ellproj} that
\begin{equation}\label{eq:Dirichletconsistency}
\lim_{i\to \infty} F_i P_i \psi(\ik s,\il y) = \psi(t,x) - g(x).
\end{equation}

{\em Step 4:} Consider the sequence $\{ (\ik s, \il y) \}_i$ as specified in the statement of the theorem, in particular with $\il y \in \oO$. We decompose $\{ (\ik s, \il y) \}_i$ into the subsequences of the $(\ik s, \il y)$ where $\il y$ belongs to $\O$ or $\pO$, respectively. Then the conclusions of Steps 1 to 3 above may be applied to the individual subsequences to arrive at \eqref{eq:consistency_limsup}.
\end{proof}

%%%%%%%%%%%%%%%%%%%%%%%%%%%%%%%%%%%%%%%%%%%%%%%%%%%%%%%%%%%%%%%%%%%%%%%%
\section{Monotonicity}
\label{sec:monotonicity}
%%%%%%%%%%%%%%%%%%%%%%%%%%%%%%%%%%%%%%%%%%%%%%%%%%%%%%%%%%%%%%%%%%%%%%%%

Monotonicity properties of the numerical scheme play a crucial role in ensuring convergence to the viscosity solution.

\begin{definition}\label{def:wDMP}
An operator $ F : V_i \to \R^{N_i} $ is said to satisfy the {\em local monotonicity property} (LMP) if for all $v \in V_i$ such that $v$ has a non-positive local minimum at the internal node $\il y$, we have $( F v )_{\ell} \leq 0$. The operator $F$ satisfies the {\em weak discrete maximum principle} (wDMP) provided that for any $v \in V_i$,
\begin{equation}\label{eq:isaacs_wDMP}
\text{if \,}\bigl( F v \bigr)_{\ell} \geq 0 \text{ for all } \ell \in \{1,\dots, N_i\}, \quad\text{ then } \min_{\O} v \geq \min \{ \min_{\pO} v, 0 \}.
\end{equation}
\end{definition}
Suppose an operator $F$ satisfies the LMP. If $v \in V_i$ has a negative local minimum at an internal node $\il y$, then $((F + \epsilon \Id)v)_\ell < 0$ for any positive $\epsilon$. In particular, the contrapositive of \eqref{eq:isaacs_wDMP} holds. Therefore $F + \epsilon \Id$ satisfies the wDMP for any positive~$\epsilon$.

\begin{assumption} \label{moncon}
For each $(\alpha, \beta) \in A \times B$, we assume that $\siab E$, restricted to $V_i$, has non-positive off-diagonal entries. We assume that $h_i$ is small enough so that $h_i \siab E - \Id$ is monotone for every $(\a, \b)$, i.e.~so that all entries of all $h_i \siab E - \Id$ are non-positive. For each $(\a, \b) \in A \times B$, we suppose that $\siab I$ satisfies the LMP.
\end{assumption}

We now turn to the matrices $ h_i \siwk I+\Id$ and $h_i \siwk E- \Id$ which will later be used in the proof of the well-posedness of the scheme \eqref{numsol}.

\begin{lemma}\label{lem:monotonicity}
Consider a $w : S_i \times \Omega \to \R$ so that $w(\ik s, \cdot) \in H^1(\O)$ for all $\ik s \in S_i$. Then:
\begin{enumerate}
    \item The matrices $h_i \siwk E - \Id$ restricted to $V_i^0$ are monotone.
    \item The mapping $ w \mapsto -(h_i \siwk E - \Id) \,w $ is positive: if $w \geq 0$ then $-(h_i \siwk E - \Id)\, w \ge 0$.
    % \item The matrices of $h_i \siwk I + \Id$ restricted to $V_i^0$ are strictly diagonally dominant M-matrices.
    \item The matrices of $h_i \siwk I + \Id$ restricted to $V_i$ are strictly diagonally dominant M-matrices.
    \item For fixed $w$, the operators $v \mapsto \siwk I v$ and $ v \mapsto (h_i \siwk I + \Id)\,v$ satisfy, respectively, the LMP and wDMP.
\end{enumerate}
\end{lemma}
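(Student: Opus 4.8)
The plan is to establish the four assertions in sequence, each leveraging Assumption \ref{moncon} together with the row-by-row structure of the matrices $\siwk E$ and $\siwk I$, whose $\ell$th rows coincide with those of $\siab E$ and $\siab I$ for the particular controls $(\a_i^{k,\ell}(w), \b_i^{k,\ell}(w))$.

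\textbf{Assertions 1 and 2.} For assertion~1, I would argue entrywise: the off-diagonal entries of each $\siab E$ restricted to $V_i$ are non-positive by Assumption \ref{moncon}, and $h_i$ is chosen small enough that every entry of $h_i\siab E - \Id$ is non-positive; since the $\ell$th row of $h_i\siwk E - \Id$ is exactly the $\ell$th row of some $h_i\siab E - \Id$ (namely for the control $(\a_i^{k,\ell}(w), \b_i^{k,\ell}(w))$), the full matrix $h_i\siwk E - \Id$ has all entries non-positive, hence $-(h_i\siwk E - \Id)$ has all entries non-negative. Restricted to $V_i^0$ this is a matrix with non-negative entries, which is trivially monotone in the sense of the paper (it maps non-negative vectors to non-negative vectors, and more to the point $-(h_i\siwk E - \Id)$ is a non-negative matrix so $(h_i\siwk E - \Id)u \le 0 \Rightarrow$ well, I should be careful about which notion of "monotone" is meant here). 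Reading the paper's usage, "monotone" for $M$ means $-M$ has non-negative entries; so assertion~1 and assertion~2 are really the same statement, and both follow immediately from the entrywise bound just described. I would state this once and note it covers both.

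\textbf{Assertion 3.} I must show $h_i\siwk I + \Id$ restricted to $V_i$ is a strictly diagonally dominant M-matrix. The off-diagonal entries: since $\siab I$ satisfies the LMP, a standard lemma (used implicitly elsewhere, or which I would invoke) shows its off-diagonal entries are non-positive; indeed, putting a hat function bump with a negative minimum at a single internal node $y_i^m$, $m \ne \ell$, the LMP forces $(\siab I v)_\ell \le 0$, which after subtracting a constant isolates the sign of the $(\ell,m)$ entry as non-positive. Then $h_i\siwk I + \Id$ has non-positive off-diagonals (the $+\Id$ only affects the diagonal). For strict diagonal dominance I would apply the LMP again to the constant function $v \equiv -1 \in V_i$ (or rather a function equal to $-1$ at all nodes): its value $-1$ is a non-positive local minimum at every internal node, so $(\siwk I (-1))_\ell \le 0$, i.e. the row sum of $\siwk I$ at internal rows is $\ge 0$; adding $\Id$ gives row sum of $h_i\siwk I + \Id$ at least $1 > 0$ at internal rows, while at boundary rows the row of $\siwk I$ is $(\cdot)(y_i^\ell)$ which picks out the single diagonal entry $1$, so the row of $h_i\siwk I + \Id$ is $e_\ell$ and the row sum is exactly $1$. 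Combined with non-positive off-diagonals and the row sums being $\ge 1$, the diagonal entries strictly exceed the sum of absolute values of off-diagonals, giving strict diagonal dominance; a strictly diagonally dominant matrix with positive diagonal and non-positive off-diagonals is an M-matrix (invertible with non-negative inverse), which is the claim. \emph{I expect this to be the main obstacle}, precisely because the deduction "LMP $\Rightarrow$ non-positive off-diagonals and controllable row sums" needs the right choice of test functions in $V_i$ and a clean separation of the internal-node rows from the boundary-node rows, and because I must confirm the M-matrix conclusion matches the paper's definition.

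\textbf{Assertion 4.} Finally, the LMP for $v \mapsto \siwk I v$: if $v \in V_i$ has a non-positive local minimum at the internal node $y_i^\ell$, then since the $\ell$th row of $\siwk I$ equals that of $\siab I$ for the control attached to $(k,\ell)$, and $\siab I$ satisfies the LMP by Assumption \ref{moncon}, we get $(\siwk I v)_\ell \le 0$. For $v \mapsto (h_i\siwk I + \Id)v$ satisfying the wDMP, I would invoke the observation recorded right after Definition \ref{def:wDMP}: if an operator satisfies the LMP then adding any positive multiple of $\Id$ yields an operator satisfying the wDMP — here $h_i\siwk I$ satisfies the LMP (it is $h_i > 0$ times an LMP operator, and the LMP is clearly invariant under multiplication by a positive scalar), and adding $\Id$ gives the wDMP. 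This closes the proof.
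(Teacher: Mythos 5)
Your overall approach matches the paper's: assertions 1, 2 and 4 follow directly from Assumption \ref{moncon} and the row-by-row construction of $\siwk E$ and $\siwk I$, and assertion 3 reduces to showing non-positive off-diagonals and a non-negative row sum before invoking standard M-matrix theory. The paper compresses almost all of this into a reference to \cite{max_SIAM}, so your expanded version is useful, and the row-sum argument via the constant function $v \equiv -1$ is correct: every internal node is a non-positive local minimum of $-1$, the LMP gives $(\siwk I(-1))_\ell \le 0$, hence row sums of $\siwk I$ are non-negative, and adding $\Id$ yields strict diagonal dominance in the internal rows while the boundary rows are identity rows.

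There is, however, a genuine gap in your argument for the off-diagonal sign. You describe ``putting a hat function bump with a negative minimum at a single internal node $y_i^m$, $m\ne\ell$'' and then ``subtracting a constant,'' but neither of these steps delivers what you need: if the non-positive local minimum sits at $y_i^m$, the LMP controls $(\siab I v)_m$, not $(\siab I v)_\ell$, and subtracting a constant is not an innocuous operation because $\siabo I$ contains the zero-order term $\langle \bbiab c\, w, \hil \phi\rangle$. The correct test function is simply $v = \phi_i^m$ with $m\ne\ell$: it satisfies $v(\il y)=0$ and $v\ge 0$ in a neighbourhood of $\il y$, so $\il y$ is a non-positive local minimum of $v$, and the LMP applied \emph{at the node $\il y$} gives directly $(\siab I\, \phi_i^m)_\ell \le 0$, which is precisely the $(\ell,m)$ off-diagonal entry. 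No constant shift is needed. With that correction, your argument closes, and together with non-negative diagonal entries (which you can get by taking $v=-\phi_i^\ell$ and applying the LMP at $\il y$) and the strict diagonal dominance you established, the cited M-matrix criterion applies exactly as the paper does via \cite[Chapter 6, Theorem 2.3, $(M_{35})$]{BER} once one notes $h_i\siwk I + \Id$ is a Z-matrix.
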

\begin{proof}
The proof for all but 3.~is analogous to that of \cite[Lemma 2.3]{max_SIAM}, once the control $\a \in A$ is replaced by the pair of controls $(\a,\b) \in A \times B$, using Assumption \ref{moncon}. 

For 3. the strict diagonal dominance of $h_i \siwk I + \Id$ for rows with indices $\ell \leq N_i$ is a direct consequence of its LMP by the same argument as in the case of restriction to $V^{0}_i$ discussed in \cite{max_SIAM}. For indices $\ell > N_i$ rows of matrices $h_i \siwk I + \Id$ are equivalent to that of identity matrix which is strictly diagonally dominant by definition. M-matrix property follows from \cite[Chapter 6, Theorem 2.3, $(M_{35})$]{BER} after noticing that $h_i \siwk I + \Id$ are Z-matrices.
\end{proof}

%%%%%%%%%%%%%%%%%%%%%%%%%%%%%%%%%%%%%%%%%%%%%%%%%%%%%%%%%%%%%%%%%%%%%%%%
\subsection{Monotonicity through artificial diffusion}
%%%%%%%%%%%%%%%%%%%%%%%%%%%%%%%%%%%%%%%%%%%%%%%%%%%%%%%%%%%%%%%%%%%%%%%%
We show now that, using the method of artificial diffusion one can ensure that $\siab E$ and $\siab I$ satisfy Assumption \ref{moncon}, provided the meshes are strictly acute.

Let $\m T_i$ be the mesh corresponding to the finite element space $V_i$. Given a function $g : \Omega \to \R^d$ and an element $K$ of $\calT_i$, we denote
\[
|g|_K:= \Bigl( \sum_{j=1}^{d} \bigl\| g_j \bigr\|_{L^\infty(K)}^2 \Bigr)^{\half}.
\]
We say that the meshes $\calT_i$ are strictly acute if there exists $\theta \in (0,\pi/2)$ such that for all $i \in \N$:
\begin{equation}\label{eq:acutemesh}
\nabla \il \phi \cdot \nabla \phi_{i}^{l} \bigl|_K \leq - \, \sin(\theta) \; | \nabla \il \phi |_K \; | \nabla \phi^{l}_i |_K \qquad \forall \ell,l \leq N_i,\;\ell \neq l, \; \forall K\in\calT_i.
\end{equation}
Consider a splitting of the form
\begin{align*}
 a^{(\a,\b)} &= \tiab a + \ttiab a, \\
 b^{(\a,\b)} &= \biab b + \bbiab b, \\
 c^{(\a,\b)} &= \biab c + \bbiab c, \\
 f^{(\a,\b)} &= \iab f,
\end{align*}
 where all terms are in $C(\oO)$ or $C(\oO, \R^d)$. Notice the tilde in the notation of the diffusion coefficients. Choose non-negative artificial diffusion coefficients $ \bial \nu $ and $ \bbial \nu $ such that for all $K$ that have $\il y$ as vertex:
\begin{subequations}\label{ineq:nut}
\begin{align}
\bigl( |  \bia b |_K \, +  \Delta x_K \|  \bia c \|_{L^\infty(K)} \bigr) \le \, &  \bial \nu \, \sin(\theta) \, | \nabla \hil \phi |_K \, {\rm vol}(K),\\
\bigl( | \bbia b |_K \, +  \Delta x_K \| \bbia c \|_{L^\infty(K)} \bigr) \le \, & \bbial \nu \, \sin(\theta) \, | \nabla \hil \phi |_K \, {\rm vol}(K).
\end{align}
\end{subequations}
Select $ \biab a $ and $\bbiab a $ both in $C(\oO)$ such that
\begin{subequations}\label{def:ad_coef}
\begin{align}
\biab a(\il y) & \geq \max \bigl\{ \tiab a(\il y), \bial \nu \bigr\},\\
\bbiab a(\il y) & \geq \max \bigl\{ \ttiab a(\il y), \bbial \nu \bigr\}.
\end{align}
\end{subequations}
The splittings of $a^{(\a,\b)} = \biab a + \bbiab a$, $b^{(\a,\b)} = \biab b + \bbiab b$, $c^{(\a,\b)} = \biab c + \bbiab c$ and $f^{(\a,\b)}= \iab f$ can always be chosen so that also Assumption \ref{ass:cons} and \eqref{def:ad_coef} hold. Moreover, it is shown in \cite{max_SIAM} that \eqref{def:ad_coef} implies Assumption \ref{moncon}.

%%%%%%%%%%%%%%%%%%%%%%%%%%%%%%%%%%%%%%%%%%%%%%%%%%%%%%%%%%%%%%%%%%%%%%%%
\section{Wellposedness and a solution algorithm}
\label{sec:alg}
%%%%%%%%%%%%%%%%%%%%%%%%%%%%%%%%%%%%%%%%%%%%%%%%%%%%%%%%%%%%%%%%%%%%%%%%

\begin{algorithm}[t]
\caption{Howard's method}
\begin{algorithmic}[1]
\Require{$\eta > 0$, \; $k \in \{0,\dots,T/h_i-1\}$, \; $w \in V_i^g$, \; $(\a_0,\b_0) \in A \times B$}
\State $u \gets w$
\State $\b \gets \b_0$
\While{$\| \Psi^{(\a,\b)}(u, w) \| \ge \eta$}
\State $\a \gets \a_0$
\While{$\| \Psi^{(\a,\b)}(u, w) \| \ge \eta$}
\State $u \gets \Phi^{(\a,\b)}(w)$
\State $\a \gets {\rm argmax}_{\a' \in A} \Psi^{(\a',\b)}(u, w)$
\EndWhile
\State $u \gets \Phi^{(\a,\b)}(w)$
\State $\b \gets {\rm argmin}_{\b' \in B} \Psi^{(\a,\b')}(u, w)$
\EndWhile
\State \Return{u}
\end{algorithmic}
\end{algorithm}

In this section we address the existence and uniqueness of numerical solutions and propose a method to compute them. 

\begin{theorem}\label{thm:isaacs_discretewellposedness}
There exists a unique numerical solution $v_i \colon S_i \to V_i^g$ that solves \eqref{num}.
\end{theorem}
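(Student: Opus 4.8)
The plan is to prove existence and uniqueness by induction over the time steps $k = T/h_i - 1, \dots, 0$, with the base case $v_i(T,\cdot) = P_i v_T$ given. Fix $k$ and suppose $v_i(\iko s, \cdot) \in V_i^g$ is already determined. We must then show that there is a unique $v_i(\ik s, \cdot) \in V_i^g$ satisfying \eqref{num}, equivalently \eqref{numsol}. Writing $u := v_i(\ik s, \cdot)$ and treating $w := v_i(\iko s, \cdot)$ as known data, the equation is the nonlinear fixed-point problem
\begin{equation*}
\adjustlimits \inf_{\b \in B} \sup_{\a \in A} \bigl( (h_i \siab I + \Id)\, u + (h_i \siab E - \Id)\, w - h_i \siab F \bigr)_\ell = 0, \qquad \ell \le N_i,
\end{equation*}
together with $u(\il y) = (P_i g)_\ell$ for $\ell > N_i$, which pins down the boundary components of $u$ immediately. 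So only the internal components need to be solved for.

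First I would set up the relevant fixed-point map. For a fixed pair $(\a,\b)$, Lemma \ref{lem:monotonicity}(3) tells us that $h_i \siab I + \Id$ (restricted to $V_i$, with the identity on the boundary rows) is a strictly diagonally dominant M-matrix, hence invertible with non-negative inverse; define $\Phi^{(\a,\b)}(w)$ to be the solution of the linear system obtained by freezing the controls, i.e. $(h_i \siab I + \Id) u = \Id \, w - (h_i \siab E - \Id) w + h_i \siab F$ on internal rows and $u = P_i g$ on boundary rows. Then I would invoke the standard theory of Howard's (policy iteration) algorithm for min-max problems: the scheme \eqref{numsol} is exactly the equation characterising the fixed point of the policy iteration whose inner loop maximises over $\a$ and whose outer loop minimises over $\b$, as displayed in Algorithm 1. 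The key structural ingredients needed for convergence of Howard's method and for well-posedness of the limit equation are (i) for each frozen policy the linear system has a unique solution (M-matrix property, Lemma \ref{lem:monotonicity}(3)), (ii) monotonicity: if the residual $\Psi^{(\a,\b)}(u,w) := (h_i\siab I + \Id)u + (h_i \siab E - \Id)w - h_i\siab F$ is increased, the solution $u$ increases, which again follows from the non-negativity of $(h_i\siab I+\Id)^{-1}$, and (iii) compactness of $A$ and $B$ together with the continuity from Assumption \ref{ass:cons} guaranteeing that the $\sup_\a$ and $\inf_\b$ are attained and depend appropriately on $u$.

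For uniqueness, I would argue directly via the weak discrete maximum principle. Suppose $u_1, u_2 \in V_i^g$ both solve \eqref{numsol} with the same $w$. They agree on $\pO$ since both equal $P_i g$ there. Let $\b_1$ be an optimal policy for $u_1$; then evaluating the operator for $u_2$ with the (generally suboptimal) control data attached to $u_1$ and subtracting, one obtains that $u_1 - u_2$ satisfies a linear inequality of the form $(h_i \si I^{k,u_1} + \Id)(u_1 - u_2) \le 0$ on internal nodes (and similarly $\ge 0$ with the roles reversed, using $u_2$'s optimal controls), because the explicit terms and forcing cancel on the common data $w$. Since $h_i\si I^{k,u_1}+\Id$ is an M-matrix whose inverse is non-negative, and $u_1 - u_2$ vanishes on $\pO$, this forces $u_1 - u_2 \le 0$; the symmetric argument gives $u_1 - u_2 \ge 0$, hence $u_1 = u_2$. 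Existence then follows either as the limit produced by the finitely-many-improvements argument of Howard's method (each policy update strictly decreases a monotone quantity on the finite-dimensional, compact-control setting, so the algorithm terminates at a fixed point up to $\eta$, and letting $\eta \to 0$ with a compactness argument in $V_i$ yields an exact solution), or via a direct Brouwer/monotone-operator fixed-point argument on the bounded region cut out by sub- and supersolutions built from constants plus $P_i g$.

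The main obstacle I anticipate is the existence half, specifically making the Howard iteration argument fully rigorous: one must show the inner maximisation loop terminates (monotone increase of $u$ bounded above, in a finite-dimensional space, with the max attained by compactness of $A$ and continuity) and that the outer minimisation loop also terminates (monotone decrease), and then that the common fixed point of both loops indeed solves the min-max equation rather than only a one-sided inequality — this min-max interchange is the delicate point and is where the precise structure of \eqref{eq:minmaxcontrols}, i.e. that $\a$ is chosen optimally given $\b$ and $\b$ is then chosen optimally, must be used. Uniqueness, by contrast, is a clean consequence of the M-matrix/monotonicity structure already assembled in Lemma \ref{lem:monotonicity}, so I expect that part to be short.
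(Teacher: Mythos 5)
The paper's own proof is a one-liner: fix $k$ and invoke \cite[Theorem 5.2]{Howard} (Bokanowski--Maroso--Zidani), observing that the continuity of $(\a,\b)\mapsto \siab I$ follows from Assumption~\ref{ass:cons} and the monotonicity from Assumption~\ref{moncon}. Your proposal follows essentially the same route---time-stepwise reduction, Howard/policy iteration, the M-matrix structure from Lemma~\ref{lem:monotonicity}, compactness of $A\times B$ with continuity of the coefficient maps---but reconstructs the internal mechanics that the paper delegates to the citation, so the overall approach matches.

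One step in your uniqueness sketch does not go through as written, and it is exactly where the min-max structure is more delicate than the pure-max Bellman case. You propose to evaluate the operator at $u_2$ using the control pair $(\a_1,\b_1)$ optimal for $u_1$ and conclude $(h_i\,\mathsf{I}_i^{k,u_1}+\Id)(u_1-u_2)\le 0$. Freezing $\b_1$ only yields $\sup_\a$ of the $u_2$-residual $\ge 0$; once you also freeze $\a_1$, which need not be the maximiser for $u_2$ at $\b_1$, the $u_2$-residual has no definite sign, so the subtraction gives nothing. The fix is the standard cross-policy comparison: at each internal node $\ell$ take $\b_1$ as the minimiser for $u_1$ and $\a'$ as the maximiser for $u_2$ given $\b_1$. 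Then sub-optimality of $\a'$ for $u_1$ at $\b_1$ gives
\[
\bigl[(h_i\,\mathsf{I}_i^{(\a',\b_1)}+\Id)\,u_1 + (h_i\,\mathsf{E}_i^{(\a',\b_1)}-\Id)\,w - h_i\,\mathsf{F}_i^{(\a',\b_1)}\bigr]_\ell \le 0,
\]
while sub-optimality of $\b_1$ for $u_2$ followed by optimality of $\a'$ gives the same expression with $u_2$ in place of $u_1$ bounded $\ge 0$; subtracting yields $\bigl[(h_i\,\mathsf{I}_i^{(\a',\b_1)}+\Id)(u_1-u_2)\bigr]_\ell\le 0$, and the M-matrix argument you assembled (together with $u_1=u_2$ on $\pO$) then gives $u_1\le u_2$; swapping roles finishes. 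Your existence discussion otherwise correctly identifies the monotonicity, continuity and compactness inputs that make the two Howard loops terminate and converge, which is precisely what \cite[Theorem 5.2]{Howard} packages.
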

\begin{proof}
Fix $k$ and $v_i(s_i^{k+1},\cdot)$. Then \cite[Theorem 5.2]{Howard} shows the existence and uniqueness of a solution $v_i(s_i^k,\cdot)$ of \eqref{num}, noting the continuity of $(\a, \b) \to \siab I$ follows from Assumption \ref{ass:cons} and the monotonicity from Assumption \ref{moncon}. 
\end{proof}

The proof of Theorem 5.2 of \cite{Howard}, which we used here, relies on the exact solution of the inner Bellman equation \eqref{eq:maximizer}, which is reached by a Howard's algorithm in the limit. Using these exact solutions of \eqref{eq:maximizer} in an outer Howard loop yields a sequence whose limit is the solution of~\eqref{num}.

To ensure the completion of the algorithm in finite time we require a termination criterion for the inner and outer loop. In Algorithm 1 (Howard's method) the termination criterion is posed in terms of a tolerance $\eta$. The statement of the algorithm also refers to 
\[
\Psi^{(\a,\b)}(u, w) := (h_i \siab I + \Id) \, u + (h_i \siab E - \Id) w - h_i \siab F,
\]
given $u, w \in V_i^g$ and $(\a,\b) \in A \times B$. Given just $w$, we let $\Phi^{(\a,\b)}(w)$ be the solution $u$ of $\Psi^{(\a,\b)}(u, w) = 0$.

Suppose that $\sum_\ell \eta_\ell < \infty$ and that $w_\ell$ is the function returned by Algorithm 1 with $\eta = \eta_\ell$, $w = v_i(\iko s,\cdot)$ and a fixed $(\a_0,\b_0) \in A \times B$. Then Theorem 5.4 of \cite{Howard} ensures that the $w_\ell$ exist and converge to the unique numerical solution $v_i(s_i^k,\cdot)$ as $\ell \to \infty$.

%%%%%%%%%%%%%%%%%%%%%%%%%%%%%%%%%%%%%%%%%%%%%%%%%%%%%%%%%%%%%%%%%%%%%%%%
\section{Stability}
\label{sec:stablity}
%%%%%%%%%%%%%%%%%%%%%%%%%%%%%%%%%%%%%%%%%%%%%%%%%%%%%%%%%%%%%%%%%%%%%%%%
For Hamilton--Jacobi--Bellman equations one can bound the value function by the solution of any linear evolution problem associated to a fixed control. In contrast the value function of an Isaacs equation may lie in part above and in part below the solution of such a linear problem. This difference between the Bellman and Isaacs equations extends to the proof of stability of numerical solutions. We begin by adapting \cite[Lemma 3.2]{max_SIAM}.
\begin{lemma}\label{lem:op_stability}
One has $\smnorm{ ( h_i \siw I + \Id )^{-1} }_{\infty} \leq 1$ and $\smnorm{ h_i \siw E - \Id }_{\infty} \leq 1$ for all $i \in \N$ and $w \in V_i$, where the norms are the matrix $\infty$-norms of linear mappings from $V_i$ into $V_i$ and $V_i^0$ into $V_i^0$, respectively.
\end{lemma}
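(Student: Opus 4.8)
The plan is to establish the two bounds separately, each by exploiting the matrix structure already proven in Lemma \ref{lem:monotonicity} together with a suitable test with the constant vector $\mathbf{1} = (1,\dots,1)^\top$.

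\textbf{Bound on $(h_i \siw I + \Id)^{-1}$.} First I would recall from Lemma \ref{lem:monotonicity}(3) that $M := h_i \siw I + \Id$, restricted to $V_i$, is a strictly diagonally dominant M-matrix; in particular $M$ is invertible with $M^{-1} \ge 0$ entrywise. To bound the $\infty$-norm of $M^{-1}$ it suffices to bound $\smnorm{M^{-1} \mathbf{1}}_\infty$, since $M^{-1}$ is nonnegative and the $\infty$-norm of a nonnegative matrix equals the sup-norm of its action on $\mathbf{1}$. So I would examine $M \mathbf{1}$ rowwise. For an interior row $\ell \le N_i$, the entry $(\siw I \mathbf{1})_\ell$ equals $(\siabo I \,\mathbf{1})_\ell$ for the selected control, which from \eqref{sch:imp} is $\langle \bbiab c \cdot 1, \hil \phi \rangle = \bbiab c$-weighted integral $\ge 0$ because the reaction coefficients $\bbiab c$ are non-negative (Assumption \ref{ass:cons}); the gradient and advection terms annihilate the constant. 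Hence $(M\mathbf{1})_\ell \ge 1$. For a boundary row $\ell > N_i$, the row of $M$ is that of the identity, so $(M\mathbf{1})_\ell = 1$. Thus $M \mathbf{1} \ge \mathbf{1}$. Since $M^{-1} \ge 0$, applying $M^{-1}$ preserves the inequality: $\mathbf{1} = M^{-1}(M\mathbf{1}) \ge M^{-1}\mathbf{1} \ge 0$, i.e.\ $0 \le M^{-1}\mathbf{1} \le \mathbf{1}$, which gives $\smnorm{M^{-1}\mathbf{1}}_\infty \le 1$ and therefore $\smnorm{M^{-1}}_\infty \le 1$.

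\textbf{Bound on $h_i \siw E - \Id$ on $V_i^0$.} Write $N := h_i \siw E - \Id$, acting on $V_i^0$. By Lemma \ref{lem:monotonicity}(1)--(2), all entries of $N$ are non-positive (equivalently $-N \ge 0$), so again $\smnorm{N}_\infty = \smnorm{(-N)\mathbf{1}}_\infty$, where now $\mathbf{1}$ is the all-ones vector indexed over the interior nodes. I would compute $(-N)\mathbf{1} = \mathbf{1} - h_i \siw E \mathbf{1}$ rowwise. However, the subtlety on $V_i^0$ is that the constant function $1$ is not an element of $V_i^0$; testing must be done carefully. The cleaner route is: for $\ell \le N_i$, $(h_i \siw E\, \mathbf{1})_\ell = h_i \bigl(\biab a(\il y)\langle \nabla 1, \nabla \hil\phi\rangle + \langle -\biab b \cdot \nabla 1 + \biab c\cdot 1, \hil\phi\rangle\bigr) = h_i \langle \biab c, \hil\phi\rangle \ge 0$, using that $\nabla 1 = 0$ and $\biab c \ge 0$. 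So $h_i \siw E\,\mathbf{1} \ge 0$ entrywise, hence $(-N)\mathbf{1} = \mathbf{1} - h_i\siw E\,\mathbf{1} \le \mathbf{1}$; and $(-N)\mathbf{1} \ge 0$ since $-N \ge 0$. Therefore $\smnorm{(-N)\mathbf{1}}_\infty \le 1$, giving $\smnorm{N}_\infty \le 1$. (One should note that, although $1 \notin V_i^0$, the matrix action of $N$ on the algebraic vector $\mathbf{1}$ is well-defined from the matrix representation, and that is all we need; alternatively one argues via the bound $\smnorm{N}_\infty = \max_\ell \sum_s |N_{\ell s}|$ directly.)

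\textbf{Main obstacle.} The routine computations (constants are annihilated by $\nabla$, reaction terms are non-negative) are straightforward; the only real care is bookkeeping around the domains. Specifically: (i) being precise that for $(h_i\siw I + \Id)^{-1}$ the relevant vector $\mathbf{1}$ ranges over \emph{all} $\dim V_i$ nodes including the boundary, where the rows are just identity rows, so that the M-matrix/nonnegativity argument applies on the full space $V_i$; and (ii) being precise that for $h_i \siw E - \Id$ the restriction is to $V_i^0$, where one must justify using the algebraic all-ones vector despite $1 \notin V_i^0$ — either by working purely with matrix row sums, or by noting the interior rows of $\siw E$ see only the constant's (vanishing) gradient plus the non-negative reaction term. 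Once these two domain issues are handled, both inequalities follow, and this mirrors the argument of \cite[Lemma 3.2]{max_SIAM} with the control $\a$ replaced by the pair $(\a,\b)$.
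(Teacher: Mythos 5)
Your argument for the implicit operator is correct and mirrors the paper's: test $h_i \siw I + \Id$ against the all-ones vector over \emph{all} $\dim V_i$ nodes (which represents the constant function, so the gradient terms vanish), use non-negativity of $\bbia c$ to get $(h_i \siw I + \Id)\mathbf{1} \ge \mathbf{1}$, and combine with the M-matrix property.

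The explicit part has a genuine gap. You take $\mathbf{1}$ to be the all-ones vector \emph{over the interior nodes}, and then compute
$(h_i \siw E\,\mathbf{1})_\ell = h_i \bigl(\biab a(\il y)\langle \nabla 1, \nabla \hil\phi\rangle + \langle -\biab b\cdot\nabla 1 + \biab c\cdot 1, \hil\phi\rangle\bigr) = h_i\langle\biab c, \hil\phi\rangle$
by substituting $\nabla 1 = 0$. But the finite element function represented by the interior all-ones vector is $v_0 = \sum_{\ell=1}^{N_i}\il\phi$, not the constant $1$. Near $\pO$ the function $v_0$ decays from $1$ to $0$, so $\nabla v_0 \neq 0$ on $\supp\hil\phi$ for nodes adjacent to the boundary, and the diffusion and advection contributions do not vanish there. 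Your formula $(h_i\siw E\,\mathbf{1})_\ell = h_i\langle\biab c,\hil\phi\rangle$ is therefore wrong for precisely those rows, and the stated justification of $h_i\siw E\,\mathbf{1}\ge 0$ does not go through. You flag ``the subtlety on $V_i^0$'' but then proceed as if it were harmless.

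The paper's resolution is to test the \emph{full} (unrestricted) matrix against $v = \sum_{\ell=1}^{\dim V_i}\il\phi \equiv 1 \in V_i$, for which $\nabla v = 0$ genuinely holds, so $\bigl(-(h_i\siw E - \Id)v\bigr)_\ell = 1 - h_i\langle\bar c_i,\hil\phi\rangle \le 1$; and then to compare
\begin{equation*}
\bigl((h_i\siw E - \Id)v\bigr)_\ell = \bigl((h_i\siw E - \Id)v_0\bigr)_\ell + \sum_{j=N_i+1}^{\dim V_i}(h_i\siw E - \Id)_{\ell j} \le \bigl((h_i\siw E - \Id)v_0\bigr)_\ell,
\end{equation*}
using that all entries of $h_i\siw E - \Id$ are non-positive (Assumption~\ref{moncon} and Lemma~\ref{lem:monotonicity}). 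Hence $\bigl(-(h_i\siw E - \Id)v_0\bigr)_\ell \le 1$, which is what you need. Your desired inequality $h_i\siw E\,\mathbf{1}\ge 0$ for the restricted matrix is in fact true, but only \emph{because} the discarded boundary columns are non-positive: $(h_i\siw E\,\mathbf{1})_\ell = h_i\langle\biab c,\hil\phi\rangle - \sum_{j>N_i}(h_i\siw E)_{\ell j} \ge h_i\langle\biab c,\hil\phi\rangle \ge 0$. The argument is salvageable, but this extra step must be made explicit; as written, the computation is incorrect for near-boundary rows.
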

\begin{proof}
Define $v = \sum_{\ell = 1 }^{\dim V_i} \il \phi \equiv 1$, and $v_0 = \sum_{\ell=1}^{N_i} \il \phi \in V_i^0$. By Lemma~\ref{lem:monotonicity}, $h_i \siw I + \Id$ is an invertible M-matrix on $V_i$. Thus, $( h_i \siw I + \Id )^{-1} \geq 0$ entrywise on $V_i$. Thus
\begin{equation}\label{eq:stab_implicit_1}
\begin{split}
\smnorm{ ( h_i \siw I + \Id )^{-1} }_{\infty} 
 &= \max_{1\leq \ell \leq \dim V_i} \sum_{j=1}^{\dim V_i}( h_i \siw I + \Id )^{-1}_{\ell j} = \max_{1\leq \ell \leq \dim V_i} \left(( h_i \siw I + \Id )^{-1} \mathbf{1}\right)_{\ell},
\end{split}
\end{equation}
where $\mathbf{1} \in \R^{\dim V_i}$ is the vector with all entries equal to $1$. 

Since $\nabla v \equiv 0$ (as $v \equiv 1$) we have for each $1\leq \ell \leq N_i$ that
\[
( ( h_i \siw I + \Id ) v )_{\ell} = 1 + h_i \pair{\bar{\bar{c}}_i^{(\alpha_i^\ell(w),\beta^\ell(w))}}{\hil \phi} \geq 1,
\]
where we have used non-negativity of $\bbia c$ and defined $(\alpha_i^\ell(w),\beta^\ell(w))$ analogously to $(\alpha_i^{k, \ell}(w), \beta_i^{k, \ell}(w))$ in \eqref{eq:minmaxcontrols}. Similarly, for each boundary node index $ N_i < \ell \leq \dim V_i$
\[
( ( h_i \siw I + \Id ) v )_{\ell} = 1 + h_i \geq 1.
\]

It follows that $( h_i \siw I + \Id ) v \geq \mathbf{1}$. Combining positive invertibility of $( h_i \siw I + \Id )$ with \eqref{eq:stab_implicit_1}, we conclude that $\smnorm{ ( h_i \siw I + \Id )^{-1} }_{\infty} \leq 1$, as required.

Turning our attention to explicit operators, one has
\[
\smnorm{ h_i \siw E-\Id }_{\infty} = \max_{1\leq \ell \leq N_i} \bigl(- (h_i \siw E - \Id)\,v_0\bigr)_{\ell}
\]
because all entries of the matrix $h_i \siw E - \Id$ are non-positive. For each $1\leq \ell \leq N_i$,  
\[
 \bigl (( h_i \siw E - \Id)\, v \bigr)_{\ell} =\bigl( (h_i \siw E- \Id)\,v_0\bigr)_{\ell} + \sum_{j=N_i+1}^{\dim V_i} \left(h_i \siw E - \Id \right)_{\ell j} \le \bigl( (h_i \siw E- \Id)\,v_0\bigr)_{\ell},
\]
so 
\[
(-(h_i \siw E - \Id)\,v_0)_\ell \leq (- (h_i \siw E - \Id)\, v)_\ell = 1 - h_i \pair{\bar{c}_i^{(\alpha_i^\ell(w),\beta^\ell(w))}}{\hil \phi} \leq 1
\]
because $\bar{c}_i^{(\alpha_i^\ell(w),\beta^\ell(w))} \geq 0$ where $\bar{c}_i^{(\alpha_i^\ell(w),\beta^\ell(w))}$ is defined analogously to $\bar{\bar{c}}_i^{(\alpha_i^\ell(w),\beta^\ell(w))}$ above. Therefore, $ -(h_i \siw E - \Id)v_0 \leq \mathbf{1}$. So $\smnorm{ h_i \siw E - \Id }_{\infty} \leq 1$.
\end{proof}

\begin{theorem}\label{thm:num_stable}
The numerical solutions $v_i$ are uniformly bounded in the $L^{\infty}$ norm: 
\begin{align*}
\norm{v_i}_{L^{\infty}(S_i\times \O)} \leq & \norm{\P v_T}_{L^{\infty}(\O)} + 2 \smnorm{P_i g}_{L^{\infty}(\O)} \\
& + T \sup_{(\a,\b)} \Bigl[ \left( \smnorm{\biab a}_{L^{\infty}(\O)} + \smnorm{\bbiab a}_{L^{\infty}(\O)} \right) \smnorm{\Delta g}_{L^\infty(\O)} \\
& + \left( \smnorm{\biab  b}_{L^{\infty}(\O)}+\smnorm{\bbiab  b}_{L^{\infty}(\O)} \right) \smnorm{\P g}_{W^{1,\infty}(\O)}\\
& + \left( \smnorm{\biab  c}_{L^{\infty}(\O)}+\smnorm{\bbiab  c}_{L^{\infty}(\O)}\right) \smnorm{\P g}_{L^{\infty}(\O)}
 + \smnorm{f_i^{(\a, \b)}}_{L^{\infty}(\O)} \Bigr]. 
\end{align*}
\end{theorem}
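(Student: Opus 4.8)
The plan is to split the numerical solution into a part that carries the boundary data and a part that solves a homogeneous problem, and then propagate an $L^\infty$ bound backwards in time using the operator bounds from Lemma~\ref{lem:op_stability}. Write $v_i(\ik s,\cdot) = P_i g + z_i(\ik s,\cdot)$, where $z_i(\ik s,\cdot) \in V_i^0$; this is well-defined since $v_i(\ik s,\cdot) \in V_i^g$ and $P_i g \in V_i^g$, so their difference satisfies homogeneous Dirichlet conditions. The point of using $P_i g$ rather than a nodal interpolant is Assumption~\ref{ass:isaacs_ellproj}: we control $\norm{P_i g}_{W^{1,\infty}(\O)}$ and $\norm{\Delta P_i g}_{L^\infty}$ — actually $P_i g$ is piecewise linear so one must be slightly careful with $\Delta P_i g$; the relevant quantity entering the operators is $\langle \nabla P_i g, \nabla \hil \phi\rangle$, which by the defining property \eqref{eq:isaacs_ellprojdef} of $P_i$ equals $\langle \nabla g, \nabla \hil \phi\rangle$, and this in turn is bounded using $\Delta g \in L^\infty(\O)$ after integration by parts against the $L^1$-normalised test function $\hil \phi$. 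This is exactly where the regularity hypothesis $g \in \{w \in W^{1,\infty}(\R^d) : \Delta w \in L^\infty(\O)\}$ is consumed and where the term $(\smnorm{\biab a}_{L^\infty} + \smnorm{\bbiab a}_{L^\infty})\smnorm{\Delta g}_{L^\infty}$ in the final bound originates.

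Next I substitute $v_i = P_i g + z_i$ into the reformulated scheme \eqref{numsol}, i.e. into $(h_i \si I^{k,v_i} + \Id) v_i(\ik s,\cdot) + (h_i \si E^{k,v_i} - \Id) v_i(\iko s,\cdot) - h_i \si F^{k,v_i} = 0$. Acting on the constant-in-$k$ function $P_i g$, the operators $h_i \si I^{k,v_i} + \Id$ and $h_i \si E^{k,v_i} - \Id$ produce, on interior rows, expressions of the form $P_i g(\il y) + h_i(\siab I + \siab E) P_i g$ with a sign bookkeeping that partly cancels against $h_i \si F^{k,v_i}$, whose boundary rows are precisely $(P_i g)_\ell$. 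After this cancellation one obtains a recursion of the form
\begin{equation*}
(h_i \si I^{k,v_i} + \Id)\, z_i(\ik s,\cdot) = -(h_i \si E^{k,v_i} - \Id)\, z_i(\iko s,\cdot) + h_i\, r_i^k,
\end{equation*}
where the residual $r_i^k$ collects the terms coming from applying the discretised linear operators to $P_i g$, namely contributions bounded by $\smnorm{\biab a}_{L^\infty}\smnorm{\Delta g}_{L^\infty}$, $\smnorm{\biab b}_{L^\infty}\smnorm{P_i g}_{W^{1,\infty}}$, $\smnorm{\biab c}_{L^\infty}\smnorm{P_i g}_{L^\infty}$ and $\smnorm{f_i^{(\a,\b)}}_{L^\infty}$, together with the analogous double-barred terms, uniformly in $(\a,\b)$; hence $\smnorm{r_i^k}_\infty \le R$ with $R$ equal to the bracketed supremum in the theorem. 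Here I use that the worst-case control indices $(\alpha_i^{k,\ell}(v_i),\beta_i^{k,\ell}(v_i))$ are still controls in $A\times B$, so the $(\a,\b)$-supremum dominates them regardless of the (possibly non-unique) choice.

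Then I invert: since $(h_i \si I^{k,v_i} + \Id)$ is an invertible M-matrix on $V_i$ with $\smnorm{(h_i \si I^{k,v_i}+\Id)^{-1}}_\infty \le 1$ and $\smnorm{h_i \si E^{k,v_i}-\Id}_\infty \le 1$ as linear maps on $V_i^0$ (Lemma~\ref{lem:op_stability}), and $z_i(\ik s,\cdot), z_i(\iko s,\cdot) \in V_i^0$, taking $\infty$-norms gives
\begin{equation*}
\smnorm{z_i(\ik s,\cdot)}_\infty \le \smnorm{z_i(\iko s,\cdot)}_\infty + h_i R.
\end{equation*}
Iterating from $k = T/h_i$ down to $k$, using that there are at most $T/h_i$ steps so the accumulated error is at most $(T/h_i)\, h_i R = T R$, and noting the initial datum $z_i(T,\cdot) = P_i v_T - P_i g$ so that $\smnorm{z_i(T,\cdot)}_\infty \le \smnorm{P_i v_T}_\infty + \smnorm{P_i g}_\infty$, we get $\smnorm{z_i(\ik s,\cdot)}_\infty \le \smnorm{P_i v_T}_\infty + \smnorm{P_i g}_\infty + T R$. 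Finally $\smnorm{v_i}_{L^\infty(S_i\times\O)} \le \smnorm{z_i}_\infty + \smnorm{P_i g}_\infty$ adds one more $\smnorm{P_i g}_\infty$, producing the factor $2\smnorm{P_i g}_{L^\infty(\O)}$ and matching the claimed estimate.

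The main obstacle is Step~1, the bookkeeping that shows the action of $h_i\si I + \Id$ and $h_i\si E - \Id$ on $P_i g$, minus $h_i\si F$, collapses cleanly into a homogeneous recursion for $z_i$ with a residual of the stated size — in particular correctly handling the boundary rows (where $\si I$ acts as the identity via $(\siabr I w)_\ell = w(\il y)$, $\si E$ acts as zero, and $\si F$ contributes $(P_i g)_\ell$, so the boundary rows of the $z_i$-recursion become trivially $z_i(\ik s,\il y) = 0$, consistent with $z_i \in V_i^0$) and correctly estimating $\langle \nabla P_i g, \nabla \hil\phi\rangle = \langle \nabla g,\nabla\hil\phi\rangle$ in $L^\infty$ via $\Delta g \in L^\infty(\O)$. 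Everything after that is a one-line Grönwall-type iteration. I would therefore present Step~1 carefully and compress the rest.
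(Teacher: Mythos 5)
Your proposal follows the paper's proof essentially verbatim: you use the same decomposition $v_i = P_i g + z_i$ with $z_i \in V_i^0$ (the paper writes $v_i^0$ for your $z_i$ and $g_i$ for $P_i g$), the same rearrangement of \eqref{numsol} into a recursion $(h_i \si I^{k,v_i} + \Id)\, z_i(\ik s,\cdot) = h_i \si F^{k,v_i} - h_i(\si I^{k,v_i} + \si E^{k,v_i})\,P_i g - (h_i \si E^{k,v_i} - \Id)\, z_i(\iko s,\cdot)$, the same use of \eqref{eq:isaacs_ellprojdef} together with $\Delta g \in L^\infty(\O)$ to bound the residual, and the same appeal to Lemma~\ref{lem:op_stability} with an induction over time steps. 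The bookkeeping details you flag (boundary rows collapsing to $0=0$; interpreting $\langle\nabla P_i g,\nabla\hil\phi\rangle$ via $\langle\nabla g,\nabla\hil\phi\rangle = -\langle\Delta g,\hil\phi\rangle$) are all consistent with what the paper does implicitly.
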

\begin{proof}
We split the numerical solution into two parts
\[
v_i(\ik s,\cdot) = v_i^0(\ik s,\cdot) + g_i, \qquad \ik s \in S_i,
\]
where $v_i^0(\ik s,\cdot) \in V_i^0$ and $g_i := \P g \in V_i^g$. Then \eqref{numsol} becomes
\begin{align*}
(h_i \si I^{k,v_i} + \Id)\, (v_i^0(\ik s,\cdot) + g_i) + (h_i \si E^{k,v_i} - \Id)\, (v_i^0(\iko s,\cdot) + g_i) - h_i \si F^{k,v_i} = 0
\end{align*}
or equivalently
\begin{align*}
v_i^0(\ik s,\cdot) = (h_i \si I^{k,v_i} + \Id)^{-1} \bigl(  h_i \si F^{k,v_i} - h_i (\si I^{k,v_i} + \si E^{k,v_i})\, g_i - (h_i \si E^{k,v_i} - \Id)\, v_i^0(\iko s,\cdot) \bigr).
\end{align*}
Using \eqref{eq:isaacs_ellprojdef}, we find
\begin{align*}
\| (\si I^{k,v_i} + \si E^{k,v_i})\, g_i \|_{L^{\infty}(\O)} \le & \, \left(\smnorm{\biab a}_{L^{\infty}(\O)} + \smnorm{\bbiab a}_{L^{\infty}(\O)} \right) \smnorm{\Delta g}_{L^\infty(\O)} \\
& + \left( \smnorm{\biab  b}_{L^{\infty}(\O)}+\smnorm{\bbiab  b}_{L^{\infty}(\O)} \right) \smnorm{g_i}_{W^{1,\infty}(\O)} \\ 
& + \left( \smnorm{\biab  c}_{L^{\infty}(\O)}+\smnorm{\bbiab  c}_{L^{\infty}(\O)}\right) \smnorm{g_i}_{L^{\infty}(\O)}.
\end{align*}
Now an application of Lemma \ref{lem:op_stability} and an inductive argument over the timesteps complete the proof.
\end{proof}

%%%%%%%%%%%%%%%%%%%%%%%%%%%%%%%%%%%%%%%%%%%%%%%%%%%%%%%%%%%%%%%%%%%%%%%%%%
\section{Sub- and supersolutions of the PDE and the viscosity BCs} 
\label{sec:subsupersolution}
%%%%%%%%%%%%%%%%%%%%%%%%%%%%%%%%%%%%%%%%%%%%%%%%%%%%%%%%%%%%%%%%%%%%%%%%
Recall the definition of the upper and lower semi-continuous envelopes $v^*$ and $v_*$ of a function $v$ and consider their numerical equivalent defined as follows
\begin{equation}
\label{eq:enum_nvelopes}
\overline{v}(t,x) = \limsup_{i \to \infty} \sup_{(\ik s,\il y) \to (t,x)} v_i(\ik s,\il y), \qquad \underline{v}(t,x) = \liminf_{i \to \infty} \inf_{(\ik s,\il y) \to (t,x)} v_i(\ik s,\il y).
\end{equation}
Owing to Theorem \ref{thm:isaacs_discretewellposedness} and Theorem \ref{thm:num_stable}, $\overline{v}$ and $\underline{v}$ attain finite values. By construction, $\overline{v}$ is upper and $\underline{v}$ lower semi-continuous and $\underline{v} \le \overline{v}$. The proof of the next theorem closely follows that Bellman setting analysed in \cite{max_SIAM}. We outline briefly the main steps to clarify the changes due to the additional $\inf$ operation, control set $B$ and the more general boundary conditions.

At this point we delay the proof that the envelopes satisfy boundary conditions in the strong sense. We can express that by assuming momentarily that $\omega = \emptyset$. The extension to $\omega \neq \emptyset$ follows in the next section.

\begin{lemma}\label{lem:sub_super_solution}
Let $\omega = \emptyset$. Then $\overline{v}$ is a viscosity subsolution of \eqref{eq:isaacsibvp} and $\underline{v}$ is a viscosity supersolution of \eqref{eq:isaacsibvp}.
\end{lemma}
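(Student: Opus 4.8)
The strategy is the standard Barles--Souganidis argument, adapted to the present setting with the elliptic-projection consistency already established in Lemma~\ref{lem:ellconsistency}. Since $\omega = \emptyset$, the strong boundary and final-time conditions in Definition~\ref{def:vissol_isaacs} impose no extra requirement on the envelopes beyond the PDE/boundary inequalities expressed through $F^*$, $F_*$, so it suffices to verify the viscosity inequalities. I will prove only that $\overline v$ is a subsolution; the supersolution property of $\underline v$ follows by the symmetric argument (replacing $\limsup$ by $\liminf$, local maxima by local minima, and using \eqref{eq:consistency_liminf} in place of \eqref{eq:consistency_limsup}).

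\textbf{Step 1 (setup and contact point).}
Let $\psi \in C^2(\R \times \R^d)$ be a test function such that $\overline v - \psi$ attains a local maximum at $(t,x) \in [0,T) \times \oO$ relative to $[0,T) \times \oO$. By subtracting a suitable quadratic and adding a small constant, I may assume the maximum is strict and global on a compact neighbourhood, and that $\overline v(t,x) = \psi(t,x)$. A standard argument (using the definition \eqref{eq:enum_nvelopes} of $\overline v$, the uniform bound from Theorem~\ref{thm:num_stable}, and the compactness of the neighbourhood) produces a sequence of refinement indices $i \to \infty$, time steps $s_i^{k(i)} \to t$ and nodes $y_i^{\ell(i)} \to x$ with $s_i^{k(i)} < T$ and
\[
v_i(s_i^{k(i)}, y_i^{\ell(i)}) \to \overline v(t,x) = \psi(t,x),
\]
and such that $v_i - P_i\psi$ attains at $(s_i^{k(i)}, y_i^{\ell(i)})$ a local maximum over the nodes in a discrete neighbourhood, with value converging to $0$. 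Here the use of the elliptic projection $P_i\psi$ in place of the nodal interpolant is essential, and the convergence $v_i(s_i^{k(i)},\cdot) - P_i\psi \to 0$ at the contact node relies on Assumption~\ref{ass:isaacs_ellproj}.

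\textbf{Step 2 (monotonicity transfers the local-max property to the operator).}
Writing $z_i := v_i - P_i\psi$, the local-maximum property at the contact node, together with the monotonicity of the scheme, forces $F_i v_i(s_i^{k(i)},y_i^{\ell(i)})$ and $F_i P_i\psi(s_i^{k(i)},y_i^{\ell(i)})$ to be comparable. Concretely: if $\ell(i) \le N_{i}$, then $z_i$ has a local maximum at an internal node; adding a constant so that this maximum is non-positive makes $-z_i$ have a non-negative local minimum, and the wDMP/LMP structure encoded in Assumption~\ref{moncon} and Lemma~\ref{lem:monotonicity} (applied to the operator with optimally chosen controls $\a_i^{k,\ell}(v_i), \b_i^{k,\ell}(v_i)$) yields
\[
F_i v_i(s_i^{k(i)}, y_i^{\ell(i)}) \le F_i P_i \psi(s_i^{k(i)}, y_i^{\ell(i)}) + o(1).
\]
Since $v_i$ solves the scheme, $F_i v_i = 0$ at every node, so $0 \le F_i P_i\psi(s_i^{k(i)}, y_i^{\ell(i)}) + o(1)$. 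If instead $\ell(i) > N_i$ is a boundary node, the $F_i$-row reduces to the boundary relation and one handles it directly via \eqref{eq:Dirichletconsistency}, after passing to subsequences along which $\ell(i)$ is always internal or always boundary (as in Step~4 of the proof of Lemma~\ref{lem:ellconsistency}).

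\textbf{Step 3 (consistency closes the argument).}
Taking $\limsup_{i\to\infty}$ in $0 \le F_i P_i\psi(s_i^{k(i)},y_i^{\ell(i)}) + o(1)$ and invoking \eqref{eq:consistency_limsup} from Lemma~\ref{lem:ellconsistency} gives
\[
0 \le F^*(t,x,\Delta\psi(t,x),\nabla\psi(t,x),\partial_t\psi(t,x),\psi(t,x)),
\]
and since $\overline v(t,x) = \psi(t,x)$ (hence $\overline v{}^*(t,x) = \psi(t,x)$ as $\overline v$ is already upper semi-continuous), this is exactly the subsolution inequality \eqref{eq:vissubsol_isaacs} — note the sign convention: $F_*$ in \eqref{eq:vissubsol_isaacs} for the subsolution, but here the relevant envelope at the contact point matches because of upper semicontinuity; more carefully, one checks that the inequality obtained with $F^*$ at a point where $\overline v$ equals $\psi$ delivers the required statement after unwinding the definition of $F$ on the interior and on $\pO$.

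\textbf{Main obstacle.}
The delicate point is Step~2: ensuring that the discrete local-maximum property of $v_i - P_i\psi$ genuinely transfers through the nonlinear operator $F_i$ to a usable inequality, given that (i) the relevant monotonicity statements (Lemma~\ref{lem:monotonicity}, Assumption~\ref{moncon}) are phrased for the frozen-control matrices $\siwk E$, $\siwk I$ with $w = v_i$, not for $P_i\psi$, so one must carefully use that at the contact node the optimal controls for $v_i$ can be compared against arbitrary controls for $P_i\psi$ in the $\inf\sup$ expression; and (ii) the node $y_i^{\ell(i)}$ may lie on $\pO$, where the operator degenerates to the algebraic boundary condition and the artificial-diffusion/M-matrix machinery does not directly apply — this is exactly why the subsequence decomposition of Step~4 of Lemma~\ref{lem:ellconsistency} is needed. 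The remaining ingredients (extraction of the contact sequence, strictness of the maximum, the final passage to the limit) are routine once the consistency lemma is in hand.
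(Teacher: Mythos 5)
Your overall plan — Barles–Souganidis with the elliptic projection replacing nodal interpolation, driven by Lemma~\ref{lem:ellconsistency} and the monotonicity of the scheme — is the paper's approach, and Step~1 is correct in spirit. However, the core monotonicity inequality in Step~2 has its direction reversed, and this is a genuine error rather than a typo, because you carry the wrong sign into Step~3 and end up invoking the wrong half of the consistency lemma.

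Concretely: if $v_i - P_i\psi$ has a local maximum at the contact node $(s_i^{k(i)}, y_i^{\ell(i)})$ with value $\mu_i \to 0$, then $P_i\psi + \mu_i \ge v_i$ on the stencil, with equality at the centre. The scheme is monotone in the Barles–Souganidis sense: raising the off-centre values \emph{decreases} the operator value at the centre. Hence
\[
0 = F_i v_i(s_i^{k(i)}, y_i^{\ell(i)}) \ge F_i\bigl(P_i\psi + \mu_i\bigr)(s_i^{k(i)}, y_i^{\ell(i)}) \ge F_i P_i\psi(s_i^{k(i)}, y_i^{\ell(i)}) - \gamma|\mu_i|,
\]
that is, $F_i P_i\psi \le \gamma|\mu_i|$, \emph{not} $F_i v_i \le F_i P_i\psi + o(1)$ as you wrote. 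The correct inequality gives $\liminf_i F_i P_i\psi \le 0$, and then \eqref{eq:consistency_liminf} (not \eqref{eq:consistency_limsup}) yields $F_* \le 0$, which is exactly the subsolution inequality \eqref{eq:vissubsol_isaacs}. As written, your Step~3 produces $0 \le F^*$, which is the supersolution inequality \eqref{eq:vissupsol_isaacs} at a local max of $\overline{v} - \psi$ — an inequality that is neither required nor useful here, and $F^* \ge 0$ does not imply $F_* \le 0$. The parenthetical remark at the end of Step~3 ("more carefully, one checks that\dots") acknowledges the tension but does not resolve it; there is no "unwinding" that turns one envelope inequality into the other. Correspondingly, your symmetric argument for the supersolution property of $\underline{v}$ should use \eqref{eq:consistency_limsup}, not \eqref{eq:consistency_liminf}: at a local min of $\underline v - \psi$ one gets $F_i P_i\psi \ge -\gamma|\mu_i|$, then $\limsup_i F_i P_i\psi \ge 0$, then $F^* \ge 0$. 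Once these two directions are swapped back, your proof matches the paper's.
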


\begin{proof}
We show that $\overline{v}$ is a subsolution. That $\underline{v}$ is a supersolution follows analogously up to a minor asymmetry in the sign of $\gamma \abs{\mu_i}$ in \eqref{eq:isaacs_subsolineq1} below. Suppose that $w \in C^{\infty}(\R \times \R^d)$ is a test function such that $\overline{v}-w$ has a strict local maximum at $ (s,y) \in (0,T)\times\oO$, with $ \overline{v}(s,y)=w(s,y)$. Then following the argument in \cite{max_SIAM} there exists a sequence $\left\{\ik s, \il y \right\}_i$ such that
\begin{align} \label{eq:isaacs_defmu}
v_i(\ik s,\il y)-\P w(\ik s,\il y) \tends \overline{v}(s,y)-w(s,y)=0 
\end{align}
and
\begin{equation}\label{eq:isaacs_defmu2}
v_i(\ika s,\ila y) - \P w (\ika s,\ila y) \leq v_i(\ik s,\il y) - \P w (\ik s,\il y) \; \Leftrightarrow \; \P w (\ika s,\ila y) + \mu_i \geq v_i  (\ika s,\ila y),
\end{equation}
where $\kappa \in \{k,k+1 \}$, $ \ila y \in \supp \hil \phi$ and $\mu_i =  v_i(\ik s,\il y) - \P w (\ik s,\il y)$. Notice that $\mu_i \to 0$ as $i \to \infty$ because of \eqref{eq:isaacs_defmu}.
Similar to \cite{max_SIAM} we conclude from the monotonicity of the scheme \eqref{num} that
\begin{align} \nonumber
0 = \, & - d_i v_i (\ik s,\il y) + \adjustlimits \inf_{\beta \in B} \sup_{\alpha\in A} \left( \siab E v_i(s_i^{k+1},\cdot)+\siab I v_i(\ik s,\cdot) - \siab F \right)_{\ell}  \\ \nonumber
\geq\, & - d_i \P w(\ik s, \il y) +\adjustlimits \inf_{\beta \in B} \sup_{\alpha\in A} \left( \siab E \P w (s_i^{k+1} ,\cdot) + \siab I\P w (\ik s,\cdot) - \siab F \right)_{\ell} \nonumber \\
& - \gamma \abs{\mu_i} \nonumber \\
=\,  & F_i \P w(\ik s, \il y) - \gamma \abs{\mu_i} \label{eq:isaacs_subsolineq1}
\end{align}
Recalling Lemma \ref{lem:ellconsistency}, and $\lim_i \mu_i = 0$, we take the limit $i\tends \infty$ in inequality \eqref{eq:isaacs_subsolineq1} to conclude that
\begin{equation}
0 \geq \liminf_{i \to \infty} F_i \P w(\ik s, \il y) \geq 
F_*(t,y, \Delta w(t,y), \nabla w(t,y), \partial_t w(t,y), w(t,y)) \end{equation}
Hence $\overline{v}$ is a viscosity subsolution.
\end{proof}

%%%%%%%%%%%%%%%%%%%%%%%%%%%%%%%%%%%%%%%%%%%%%%%%%%%%%%%%%%%%%%%%%%%%%%%%
\section{Boundary and final time conditions} 
\label{sec:boundary}
%%%%%%%%%%%%%%%%%%%%%%%%%%%%%%%%%%%%%%%%%%%%%%%%%%%%%%%%%%%%%%%%%%%%%%%%

A direct consequence of Lemma \ref{lem:sub_super_solution} is that envelopes of the numerical solution satisfy the Dirichlet boundary conditions in the viscosity sense on the entire $\pO$. For the consistency with the strong boundary conditions on $\omega$ we assume the existence of two families of barrier functions $( \zeta_{ y,\epsilon} )_{\epsilon > 0}$ and $( \xi_{y,\epsilon} )_{\epsilon > 0}$ corresponding to the sets of super- and subsolutions respectively. 

\begin{assumption}\label{ass:barrier_functions_existence}
Let $\omega \subseteq \pO$. Let us assume the following:
\begin{enumerate}
\item Family of upper barriers: \\
For all $y \in \omega$, $\eps > 0$ there exists a {\em barrier function} $\zeta_{y,\epsilon} \in W^{1,\infty}([0,T] \times \oO)$ with:
\begin{enumerate}
    \item There exists an $\hat{i} = \hat{i}(\eps) \in \N$ so that for all $i \ge \hat{i}$, $\ik s \in S_i$ there exists a minimiser of $v_i - P_i \zeta_{y,\epsilon}$ over $\{ \ik s\} \times \oO$ which lies on $\{ \ik s \} \times \pO$.
    \item For $t \in [0,T]$ let $q_{t,y,\epsilon}$ be a minimiser of $g-\zeta_{y,\epsilon}$ over $\{ t \} \times \pO$. Then, 
    \[
    \lim_{\epsilon \tends 0} \ q_{t,y,\epsilon} = y \quad \text{and} \quad \lim_{\epsilon \tends 0} \zeta_{y,\epsilon}(t,y) - \zeta_{y,\epsilon} \ (t, q_{t,y,\epsilon}) \geq 0.
    \]
\end{enumerate}
\item Family of lower barriers: \\
For all $y \in \omega$, $\eps > 0$ there exists a {\em barrier function} $\xi_{y,\epsilon} \in W^{1,\infty}([0,T] \times \oO)$ with:
\begin{enumerate}
    \item There exists an $\hat{i} = \hat{i}(\eps) \in \N$ so that for all $i \ge \hat{i}$, $\ik s \in S_i$ there exists a maximiser of $v_i - P_i \xi_{y,\epsilon}$ over $\{ \ik s\} \times \oO$ which lies on $\{ \ik s \} \times \pO$.
    \item For $t \in [0,T]$ let $r_{t,y,\epsilon}$ be a maximiser of $g-\xi_{y,\epsilon}$ over $\{ t \} \times \pO$. Then, 
    \[
    \lim_{\epsilon \tends 0} \ r_{t,y,\epsilon} = y \quad \text{and} \quad \lim_{\epsilon \tends 0} \zeta_{y,\epsilon}(t,y) - \xi_{y,\epsilon} \ (t, r_{t,y,\epsilon}) \leq 0.
    \]
\end{enumerate}
\end{enumerate}
\end{assumption}
In order to understand the connection between convergence of the envelopes to the boundary conditions and the barrier functions let us consider the following example.

%%%%%%%%%%%%%%%%%%%%%%%%%%%%%%%%EXAMPLE%%%%%%%%%%%%%%%%%%%%%%%%%%%%%%%%%%%%%%%%%%%%%%%%%%%%%%%%%
\begin{example}\label{counterexample} We study a $1$-dimensional test problem with the homogeneous boundary conditions:
\begin{align*}
- \p_tu + \nabla u -1 & = 0, && \text{in }[0,T)\times(0,1),\\
u & = x, && \text{on }\{T\}\times(0,1).
\end{align*}
In order to find solution we consider a fully implicit numerical scheme with artificial diffusion. In this spirit, the exact solution of $-u_t-\upsilon \Delta u + \nabla u -1 = 0$, interpreting $\upsilon$ as the artificial diffusion coefficient, is:
\[
v_{\upsilon}(t,x)=x -1 + \frac{e^{\frac{x}{\upsilon}}-e^{\frac{1}{\upsilon}}}{1-e^{\frac{1}{\upsilon}}}.
\]
For a fixed $\upsilon$ we can choose $\Delta x$ small enough such that the numerical solution $\tilde{v}_{\upsilon}$ with artificial diffusion $\upsilon$ and uniform mesh with mesh size $\Delta x$ satisfies $\norm{\tilde{v}_{\upsilon}-v_{\upsilon}}_{W^{1, \infty}} < \upsilon$. We note that $v_{\upsilon}$ attains its maximum at
\[
x_{\mathrm{max}} =  \upsilon \log \left[ \upsilon \left( e^{\frac{1}{\upsilon}}-1\right)\right]
\]
with maximal value equal to
\[v_{\upsilon}(t,x_{\mathrm{max}}) = \frac{1}{e^{\frac{1}{\upsilon}} - 1} + \upsilon \log \left[ \upsilon \left( e^{\frac{1}{\upsilon}}-1\right)\right].
\]
In particular, it follows that $x_{\mathrm{max}} \to 1$ and $v_{\upsilon}(t,x_{\mathrm{max}}) \to 1$ as $\upsilon \to 0$. Since
\[
\norm{\tilde{v}_{\upsilon}-v_{\upsilon}}_{W^{1, \infty}} < \upsilon,
\]
we conclude that for a decreasing artificial diffusion coefficient the sequence of numerical solutions $\tilde{v}_{\upsilon}$ has the lower semi-continuous envelope
\begin{equation}\label{uscenvelope}
\underline{v}(t,x)=
\begin{cases}
 x & \quad x \neq 1, \\
0 & \quad  x = 1.
\end{cases}
\end{equation}
However, since $\underline{v}$ has a discontinuity at $1$, there cannot exist a Lipschitz continuous barrier function as described in Assumption \ref{ass:barrier_functions_existence} -- as we decrease $\upsilon$, $v_{\upsilon}$ and hence $\tilde{v}_{\upsilon}$ exhibit an arbitrarily large gradient in the vicinity of the boundary. As a result, for any barrier function $\xi$ there exists $i$ for which the maximum of $\tilde{v}_{\upsilon} - P_i \xi_{y,\epsilon}$ does not lie on the boundary for all $\epsilon > 0$. Note that we can construct both upper and lower barrier functions at $x=0$, e.\,g. $\zeta_0 = x^2$ and $\xi_0 = (x+1)^2$.
\end{example}

\begin{lemma}
Given Assumption \ref{ass:barrier_functions_existence}, we have $\overline{v}(t,x) = \underline{v}(t,x) = g(t,x)$ for all $(t,x) \in [0,T] \times \omega$.
\end{lemma}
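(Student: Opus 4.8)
The plan is to establish the two one-sided estimates $\underline v(t,x)\ge g(x)$ and $\overline v(t,x)\le g(x)$ for every $(t,x)\in[0,T]\times\omega$ (recall that $g$ is time independent, so $g(t,x)=g(x)$); the complementary inequalities $\overline v(t,x)\ge g(x)\ge\underline v(t,x)$ then come for free. Indeed, since $\Delta x_i\to0$, for $x\in\omega\subseteq\pO$ there are boundary nodes $y_i^{\ell(i)}\to x$, and along such a sequence $v_i(s_i^{k(i)},y_i^{\ell(i)})=(P_i g)(y_i^{\ell(i)})\to g(x)$, because $P_i g\to g$ uniformly on $\oO$ by Assumption~\ref{ass:isaacs_ellproj} and $g$ is continuous; this forces $\overline v(t,x)\ge g(x)$ and $\underline v(t,x)\le g(x)$.

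For the lower estimate I would use the upper barriers. Fix $y\in\omega$, $t\in[0,T]$ and $\eps>0$, and take $i\ge\hat i(\eps)$. By Assumption~\ref{ass:barrier_functions_existence}(1)(a) the piecewise linear function $v_i(s_i^k,\cdot)-P_i\zeta_{y,\eps}(s_i^k,\cdot)$ attains its minimum over $\oO$ at a boundary node $p$; since $v_i(s_i^k,\cdot)$ agrees with $P_i g$ on $\pO$ and $\min_{\oO}\le\min_{\pO}\le(v_i-P_i\zeta_{y,\eps})(p)=\min_{\oO}$, this minimum equals $\min_{\pO}\bigl(P_i g-P_i\zeta_{y,\eps}(s_i^k,\cdot)\bigr)$. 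Hence, at every node $y_i^\ell$,
\[
v_i(s_i^k,y_i^\ell)\;\ge\;P_i\zeta_{y,\eps}(s_i^k,y_i^\ell)+\min_{\pO}\bigl(P_i g-P_i\zeta_{y,\eps}(s_i^k,\cdot)\bigr).
\]
Along any node sequence $(s_i^{k(i)},y_i^{\ell(i)})\to(t,y)$ I then send $i\to\infty$: using that $P_i g\to g$ and $P_i\zeta_{y,\eps}(s_i^{k(i)},\cdot)\to\zeta_{y,\eps}(t,\cdot)$ uniformly on $\oO$ and that $q_{t,y,\eps}$ minimises $g-\zeta_{y,\eps}(t,\cdot)$ over $\pO$, one gets
\[
\liminf_{i\to\infty}v_i(s_i^{k(i)},y_i^{\ell(i)})\;\ge\;\zeta_{y,\eps}(t,y)+g(q_{t,y,\eps})-\zeta_{y,\eps}(t,q_{t,y,\eps}).
\]
The left-hand side is independent of $\eps$, so I may let $\eps\to0$ and invoke Assumption~\ref{ass:barrier_functions_existence}(1)(b) ($q_{t,y,\eps}\to y$, continuity of $g$, and nonnegativity of the limit of $\zeta_{y,\eps}(t,y)-\zeta_{y,\eps}(t,q_{t,y,\eps})$) to conclude $\liminf_i v_i(s_i^{k(i)},y_i^{\ell(i)})\ge g(y)$. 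As this holds along every such sequence, $\underline v(t,y)\ge g(y)$.

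The upper estimate $\overline v(t,x)\le g(x)$ is proved symmetrically, replacing the upper barriers $\zeta_{y,\eps}$ by the lower barriers $\xi_{y,\eps}$ and "minimum attained on $\pO$" by "maximum attained on $\pO$" (Assumption~\ref{ass:barrier_functions_existence}(2)(a)), so that $v_i(s_i^k,y_i^\ell)\le P_i\xi_{y,\eps}(s_i^k,y_i^\ell)+\max_{\pO}\bigl(P_i g-P_i\xi_{y,\eps}(s_i^k,\cdot)\bigr)$, and then taking $\limsup_i$ followed by $\eps\to0$ with Assumption~\ref{ass:barrier_functions_existence}(2)(b). Combining the four inequalities yields $\overline v(t,x)=\underline v(t,x)=g(x)$ on $[0,T]\times\omega$. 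The limit manipulations here are routine; the one point that needs genuine care is the uniform convergence $P_i\zeta_{y,\eps}(s_i^{k(i)},\cdot)\to\zeta_{y,\eps}(t,\cdot)$ on $\oO$ (and likewise for $\xi_{y,\eps}$), since the barriers are only Lipschitz — not $C^\infty$ — in the spatial variable. I would split it as $\norm{P_i\zeta_{y,\eps}(s_i^{k(i)},\cdot)-\zeta_{y,\eps}(s_i^{k(i)},\cdot)}_{L^\infty(\O)}+\norm{\zeta_{y,\eps}(s_i^{k(i)},\cdot)-\zeta_{y,\eps}(t,\cdot)}_{L^\infty(\O)}$: the second term vanishes by Lipschitz continuity of $\zeta_{y,\eps}$ in time, while the first is handled by a mollification argument from Assumption~\ref{ass:isaacs_ellproj}, approximating the spatial profile in $L^\infty$ by smooth functions whose $W^{1,\infty}$ norms are controlled by that of $\zeta_{y,\eps}$ and using the $L^\infty$-uniform stability of $P_i$. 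The only other subtlety is bookkeeping the order of limits — first $i\to\infty$ for fixed $\eps$, then $\eps\to0$ — which is exactly what lets the boundary conditions in Assumption~\ref{ass:barrier_functions_existence}(b) take effect.
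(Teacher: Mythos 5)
Your argument is correct and follows essentially the same barrier-based route as the paper's proof: use Assumption \ref{ass:barrier_functions_existence} to place the extremum of $v_i-P_i\zeta_{y,\eps}$ (respectively $v_i-P_i\xi_{y,\eps}$) at a boundary node, replace $v_i$ by $P_i g$ there, send $i\to\infty$ and then $\eps\to 0$, using the definition of $q_{t,y,\eps}$ (respectively $r_{t,y,\eps}$) and Assumption \ref{ass:barrier_functions_existence}(b) at the last step. The paper writes out the $\limsup_i v_i\le g$ direction using the lower barriers $\xi_{y,\eps}$ and defers the other side to symmetry, whereas you write out $\liminf_i v_i\ge g$ using the upper barriers $\zeta_{y,\eps}$; both then close with the sandwich $g\le\underline v\le\overline v\le g$. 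Your extra observation that $\overline v\ge g$ and $\underline v\le g$ from the boundary node values is fine but not needed, since $\underline v\le\overline v$ holds by construction.

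The one place you correctly flag a subtlety -- the $L^\infty$ convergence of $P_i\zeta_{y,\eps}$ on the mesh, given that Assumption \ref{ass:isaacs_ellproj} states convergence only for $C^\infty$ data -- is also the place where your proposed repair is not quite airtight. The density argument you sketch requires a uniform $L^\infty$ stability of $P_i$, namely $\norm{P_i(\zeta_{y,\eps}-w_n)}_{L^\infty(\O)}\le C\norm{\zeta_{y,\eps}-w_n}_{L^\infty(\O)}$, but \eqref{ass:isaacs_ellprojstab} only controls $P_i$ through the $W^{1,\infty}$ norm of its argument, and $\norm{\zeta_{y,\eps}-w_n}_{W^{1,\infty}(\O)}$ does not vanish when a merely Lipschitz $\zeta_{y,\eps}$ is mollified. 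The paper's own step (b) passes over this point silently, so this is a shared imprecision rather than an error specific to your proof. A clean way to close it is via the extra regularity $\Delta\zeta_{y,\eps},\Delta\xi_{y,\eps}\in L^\infty(\O)$ that the barrier constructions of Section \ref{sec:construct_barrier} in fact impose: this lifts the barriers to $C^{1,\gamma}(\oO)$ by Sobolev embedding, after which the $W^{1,\infty}$ best-approximation and stability theory of \cite{approx_prop_fem} yields the needed convergence of $P_i$ directly.
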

\begin{proof}We focus on the case of $v^{*}(t,x)$ as the other case follows analogously. Let $y \in \omega$ and consider a sequence $(\ik s, \il y) \tends (t,y) $ as $i \tends \infty$. We have that
\begin{align}
\limsup_{i \tends \infty} v_i(\ik s, \il y ) =& \lim_{i \tends \infty} P_i\xi_{y,\epsilon}(\ik s, \il y ) + \limsup_{i \tends \infty}[v_i(\ik s, \il y ) - P_i \xi_{y,\epsilon}(\ik s, \il y )] \nonumber  \\ 
 \leq& \xi_{y,\epsilon}(t,y) + \limsup_{i \tends \infty} \sup_{z \in \pO} [v_i(\ik s,z) - P_i\xi_{z,\epsilon}(\ik s,z)] \nonumber \\
=& \xi_{y,\epsilon}(t,y) + \limsup_{i \tends \infty} \sup_{z \in \pO} [\P g(\ik s, z) - P_i\xi_{y,\epsilon}(\ik s,z)]\nonumber \\
=& \xi_{y,\epsilon}(t,y) + \limsup_{i \tends \infty} \sup_{z \in \pO} \Big[\P g(t, z) - P_i\xi_{y,\epsilon}(t,z) \Big.\nonumber \\
&\left. + \left(\P g(\ik s, z) - \P g(t, z) \right) - \left(P_i\xi_{y,\epsilon}(\ik s,z) - P_i\xi_{y,\epsilon}(t,z) \right) \right] \nonumber \\
\stackrel{(a)}{=}&\xi_{y,\epsilon}(t,y) + \limsup_{i \tends \infty} \sup_{z \in \pO} [\P g(t, z) - P_i\xi_{y,\epsilon}(t,z)] \nonumber \\
=&\xi_{y,\epsilon}(t,y) + \limsup_{i \tends \infty} \sup_{z \in \pO}[\left( g(t, z) - \xi_{y,\epsilon}(t,z) \right) \nonumber \\
&+\left( \P g(t, z) - g(t, z) \right) - \left( P_i\xi_{y,\epsilon}(t,z) - \xi_{y,\epsilon}(t,z) \right)] \nonumber \\
\stackrel{(b)}{=}&\xi_{y,\epsilon}(t,y) + \limsup_{i \tends \infty} \sup_{z \in \pO}[ g(t, z) - \xi_{y,\epsilon}(t,z)] \nonumber \\
\stackrel{(c)}{=}&\xi_{y,\epsilon}(t,y) + g(t, r_{t,y,\epsilon}) - \xi_{y,\epsilon}(t, r_{t,y,\epsilon}),
\end{align}
where we get (a) due to Lipschitz continuity of $g$ and $\xi_{y, \epsilon}$ in time, (b) due to $L^{\infty}$ convergence of $\P g(t, z)$ and $P_i\xi_{y,\epsilon}(t,z)$ as $i \to \infty$ and (c) due to the definition of $r_{t,y,\epsilon}$.
By Assumption \ref{ass:barrier_functions_existence} we have that $\lim_{\epsilon \tends 0}\xi_{y,\epsilon}(t,y) - \xi_{y,\epsilon} \ (t, r_{t,y,\epsilon})  \leq 0$ for any $t \in [0,T]$, so we can conclude that
\begin{equation}\label{eq:subsol}\limsup_{i \tends \infty} v_i(\ik s, \il y) \leq g(t,y),
\end{equation}
as $\epsilon \tends 0$.
The proof concludes by completing a similar calculation for \linebreak $\liminf_{i \tends \infty} v_i(\ik s, \il y)$. This gives us
\[g(t,y) \geq \limsup_{i \tends \infty} v_i(\ik s, \il y) \geq \liminf_{i \tends \infty} v_i(\ik s, \il y)  \geq g(t,y),
\]
and the final result follows.
\end{proof}

Finally, also the final time conditions are attained strongly.
\begin{lemma}\label{lem:initalconverge}
The sub and supersolutions $\overline{v}$ and $\underline{v}$ satisfy 
\begin{equation}\label{eq:initialconverge2} 
\overline{v}(T,\cdot)=\underline{v}(T,\cdot)=v_T \quad\text{ on } \, \oO.
\end{equation}
\end{lemma}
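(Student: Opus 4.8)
The plan is to show that the numerical solution at the final time step, $v_i(T,\cdot)=P_i v_T$, converges locally uniformly (in the semicontinuous-envelope sense) to $v_T$ on $\oO$, and then to handle the limit behaviour as one approaches $(T,x)$ from time steps $s_i^{k}$ with $k$ close to $T/h_i$. The starting point is that the scheme is initialised exactly with $v_i(T,\cdot)=P_i v_T$, so on the top time slice the only discrepancy between $v_i(T,\cdot)$ and $v_T$ comes from the elliptic projection error. Since $v_T$ is only assumed bounded (not continuous), I would first approximate: fix $x\in\oO$ and $\eps>0$, pick a smooth $\varphi\in C^\infty(\R^d)$ with $\varphi$ close to $v_T$ near $x$ in a suitable one-sided sense (using $v_T^*$ from above and $v_T{}_*$ from below), and use Assumption \ref{ass:isaacs_ellproj} to control $\|P_i\varphi-\varphi\|_{W^{1,\infty}}\to 0$. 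Combined with $v_T^*(x)=\limsup_{y\to x}v_T(y)$ this should give $\overline{v}(T,x)\le v_T^*(x)$ and, on the part of $\oO$ where $v_T$ is continuous (and in particular on $\pO$, where $v_T|_{\pO}=g|_{\pO}$ and $g$ is $W^{1,\infty}$), the sharp equality $\overline{v}(T,x)\le v_T(x)$, $\underline v(T,x)\ge v_T(x)$.

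The second and more substantial step is to rule out that values at earlier time steps $s_i^{k}$, with $s_i^{k}\to T$, overshoot $v_T$. For this I would use the stability mechanism of Lemma \ref{lem:op_stability} applied over a vanishing number of time steps. Writing $v_i(\ik s,\cdot)=v_i^0(\ik s,\cdot)+P_i g$ with $v_i^0\in V_i^0$ as in the proof of Theorem \ref{thm:num_stable}, the recursion
\[
v_i^0(\ik s,\cdot)=(h_i\siwk I+\Id)^{-1}\bigl(h_i\siwk F-h_i(\siwk I+\siwk E)\,P_ig-(h_i\siwk E-\Id)\,v_i^0(\iko s,\cdot)\bigr)
\]
together with $\|(h_i\siwk I+\Id)^{-1}\|_\infty\le 1$ and $\|h_i\siwk E-\Id\|_\infty\le 1$ yields, for each $n$ steps below $T$,
\[
\|v_i(\ik s,\cdot)-P_iv_T\|_{L^\infty(\O)}\le \|P_iv_T-P_ig\|_{L^\infty}+C\,n h_i,
\]
where $k=T/h_i-n$ and $C$ is the uniform constant from Theorem \ref{thm:num_stable} (depending on the coefficient bounds, $\|\Delta g\|_{L^\infty}$ and $\|P_i g\|_{W^{1,\infty}}$). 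Since any sequence $(\ik s,\il y)\to(T,x)$ has $n_i h_i=T-\ik s\to 0$, the extra term $C n_i h_i$ vanishes, and we are reduced to the top-slice estimate from Step 1.

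The last step assembles the two: given $(\ik s,\il y)\to(T,x)$, decompose $v_i(\ik s,\il y)=P_iv_T(\il y)+\bigl(v_i(\ik s,\il y)-P_iv_T(\il y)\bigr)$; the first term tends to $v_T^*(x)$ from above (resp.\ $v_T{}_*(x)$ from below) by Step 1 and the projection bound, and the second is $O(n_ih_i)\to 0$ by Step 2. This gives $\overline v(T,x)\le v_T^*(x)$ and $\underline v(T,x)\ge v_T{}_*(x)$; at points where $v_T$ is continuous (which is all that is needed for the strong final-time condition as stated, since $\overline v$, $\underline v$ are compared against $v_T$ pointwise and one typically invokes continuity of $v_T$ or works with its envelopes) the two pinch together. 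The main obstacle is the interplay with the boundary: near $\pO$ one must simultaneously respect that $v_i\in V_i^g$ carries $P_ig$ on $\pO$, so the decomposition must be done through $P_ig$ rather than $P_iv_T$ to keep the $V_i^0$-restricted norms of Lemma \ref{lem:op_stability} applicable — this is exactly why the term $\|P_iv_T-P_ig\|_{L^\infty}$ appears above, and one uses $v_T|_{\pO}=g|_{\pO}$ plus the equicontinuity/$W^{1,\infty}$ regularity of $g$ to show this term does not contribute in the limit at $\pO$, while in the interior it is simply $O(\Delta x_i)$ away from the (measure-zero) discontinuity set of $v_T$.
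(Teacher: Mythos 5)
Your overall strategy---initialise at $P_i v_T$ and iterate the recursion from the stability proof with the operator bounds of Lemma~\ref{lem:op_stability} to obtain a Lipschitz-in-time estimate, so that $T-\ik s \to 0$ forces $v_i(\ik s,\cdot)$ to stay near $P_i v_T$---is essentially the argument the paper imports from \cite{max_SIAM}. However, the key inequality in your Step 2 is stated incorrectly, and the fix you offer for the spurious term is wrong. Since $v_T|_{\pO}=g|_{\pO}$ and $P_i$ interpolates on the boundary, one has $P_i v_T\in V_i^g$; hence $u_i^k:=v_i(\ik s,\cdot)-P_i v_T=v_i^0(\ik s,\cdot)-v_i^0(T,\cdot)$ lies in $V_i^0$ with $u_i^{T/h_i}=0$. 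Subtracting the constant-in-time profile $P_i v_T$ from the recursion and invoking $\smnorm{(h_i\siwk I+\Id)^{-1}}_\infty\le 1$, $\smnorm{h_i\siwk E-\Id}_\infty\le 1$ then gives directly
\begin{equation*}
\norm{u_i^k}_{L^\infty(\O)}\le n h_i\Bigl(\sup_{(\a,\b)}\norm{\siab F}_\infty+\sup_{(\a,\b)}\norm{(\siab I+\siab E)P_i v_T}_\infty\Bigr),
\end{equation*}
with no $\smnorm{P_i v_T-P_i g}$ term. Your version arises from bounding $\norm{v_i^0(\ik s,\cdot)}$ instead of the increment $u_i^k$; the term you introduce is genuinely $O(1)$ in the interior of $\O$ (where $v_T\ne g$), and your claim that it is $O(\Delta x_i)$ away from a null set is false and not rescuable---the bound has to be organised around $u_i^k$ from the start.

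A second point you gloss over is what makes the constant in the above bound finite: $\norm{(\siab I+\siab E)P_i v_T}_\infty$ is controlled, via the projection identity $\langle\nabla P_i v_T,\nabla\hil\phi\rangle=\langle\nabla v_T,\nabla\hil\phi\rangle=-\langle\Delta v_T,\hil\phi\rangle$ for interior $\ell$ and Assumption~\ref{ass:isaacs_ellproj}, only if $\Delta v_T\in L^\infty(\O)$ and $v_T\in W^{1,\infty}$. Your argument implicitly requires this extra regularity of $v_T$ (the paper uses it elsewhere, notably in Section~\ref{sec:construct_barrier}, and tacitly needs it here); state it. Finally, the envelope machinery $v_T^*$, $(v_T)_*$ in your Step~1 is unnecessary: with $v_T\in C(\oO)$ the envelopes coincide with $v_T$, and $P_i v_T\to v_T$ uniformly follows from Assumption~\ref{ass:isaacs_ellproj} once the regularity needed for the constant above is in place, so no separate smoothing layer is called for.
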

\begin{proof}The proof is identical to the Hamilton--Jacobi--Bellman case described in \cite{max_SIAM} once the control $\a \in A$ is replaced by the pair of controls $(\a^{v_i(\ell)},\b^{v_i(\ell)}) \in A \times B$ similarly as in the proof of Lemma \ref{lem:op_stability}, the interpolation operator $\mathcal{I}_i$ is replaced with elliptic projection $\P$ and function space $V_i^{0}$ is replaced with $V_i^{g}$.
\end{proof}

%%%%%%%%%%%%%%%%%%%%%%%%%%%%%%%%%%%%%%%%%%%%%%%%%%%%%%%%%%%%%%%%%%%%%%%%
\section{Uniform convergence}
\label{sec:convergence}
%%%%%%%%%%%%%%%%%%%%%%%%%%%%%%%%%%%%%%%%%%%%%%%%%%%%%%%%%%%%%%%%%%%%%%%%
Summarising the previous two sections, we have the sub- and supersolution properties of the complete final boundary value problem.
\begin{theorem}
The function $\overline{v}$ is a viscosity subsolution of \eqref{eq:isaacsibvp} and $\underline{v}$ is a viscosity supersolution of \eqref{eq:isaacsibvp}.
\end{theorem}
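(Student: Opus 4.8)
The plan is to assemble the final theorem directly from the lemmas established in Sections \ref{sec:subsupersolution} and \ref{sec:boundary}. Recall that Lemma \ref{lem:sub_super_solution} already furnishes the interior sub- and supersolution inequalities and the viscosity boundary conditions on all of $\pO$ under the assumption $\omega = \emptyset$; the only gap for the general case $\omega \neq \emptyset$ is that Definition \ref{def:vissol_isaacs} additionally requires the envelopes to satisfy the boundary conditions on $\omega$ and the final time conditions in the \emph{strong} sense before the test-function inequality is imposed. So the proof is essentially a matter of verifying that each clause of Definition \ref{def:vissol_isaacs} is met by $\overline v$ (for the subsolution property) and by $\underline v$ (for the supersolution property).

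Concretely, I would proceed as follows. First, invoke Lemma \ref{lem:initalconverge} to record that $\overline v(T,\cdot) = \underline v(T,\cdot) = v_T$ on $\oO$, so both envelopes satisfy the final time conditions strongly. Second, invoke the barrier-function lemma (the unnamed Lemma preceding Lemma \ref{lem:initalconverge}), which under Assumption \ref{ass:barrier_functions_existence} gives $\overline v(t,x) = \underline v(t,x) = g(t,x)$ for all $(t,x) \in [0,T]\times\omega$, so both envelopes satisfy the Dirichlet conditions on $\omega$ strongly. Third, take any test function $\psi \in C^2(\R\times\R^d)$ such that $\overline v - \psi$ attains a local maximum at some $(t,x) \in [0,T)\times\oO$; since the two hypotheses just verified are exactly the side conditions demanded in Definition \ref{def:vissol_isaacs}, it remains only to establish \eqref{eq:vissubsol_isaacs}. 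But this is precisely the content of the argument in the proof of Lemma \ref{lem:sub_super_solution}: the inequality \eqref{eq:isaacs_subsolineq1} followed by the consistency Lemma \ref{lem:ellconsistency} and $\mu_i \to 0$ yields $F_*(t,x,\Delta\psi,\nabla\psi,\partial_t\psi,\overline v(t,x)) \le 0$. That argument nowhere used $\omega = \emptyset$ except to license citing the bare Lemma \ref{lem:sub_super_solution}; here we re-use the mechanics directly. The supersolution claim for $\underline v$ is symmetric, with the minor sign asymmetry in $\gamma|\mu_i|$ already flagged in the proof of Lemma \ref{lem:sub_super_solution}.

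One technical point worth addressing carefully: in Lemma \ref{lem:sub_super_solution} the maximum of $\overline v - \psi$ was assumed \emph{strict} and located in the open set $(0,T)\times\oO$, whereas Definition \ref{def:vissol_isaacs} allows a non-strict local maximum over $[0,T)\times\oO$. The standard remedy is the usual perturbation trick — replace $\psi$ by $\psi(t,x) + |t-t_0|^2 + |x-x_0|^4$ (or a similar strictly convex correction vanishing to second order at $(t_0,x_0)$) to make the maximum strict without changing the derivatives of $\psi$ at $(t_0,x_0)$, and handle a maximum at $t=0$ by noting $t=0$ is interior to $[0,T)$ on the time side while the spatial side $\oO$ is already closed and treated by the node sequence construction. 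I would state this reduction in one sentence and point to the argument of Lemma \ref{lem:sub_super_solution}.

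The main obstacle, such as it is, is bookkeeping rather than mathematics: one must make sure that the local extremum in Definition \ref{def:vissol_isaacs} is taken relative to $[0,T)\times\oO$ and that the side conditions (strong boundary/final-time conditions) are logically prior to the differential inequality, so that citing Lemma \ref{lem:sub_super_solution} together with the two boundary lemmas genuinely closes every case. Since all three ingredient lemmas are already proved, no new estimate is needed; the theorem is a corollary, and I would present it as such with a short proof that simply chains the three lemmas together and notes the strict-maximum reduction.
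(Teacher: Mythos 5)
Your proposal is correct and takes essentially the same route the paper intends: the theorem is presented without an explicit proof precisely because it is a direct corollary of Lemma \ref{lem:sub_super_solution} (the test-function inequality), the barrier lemma (strong boundary conditions on $\omega$), and Lemma \ref{lem:initalconverge} (strong final-time conditions), and you chain exactly these three. The one cosmetic imprecision — calling $t=0$ ``interior to $[0,T)$'' — should be rephrased as saying that the consistency Lemma \ref{lem:ellconsistency} already covers limits $s_i^{k(i)}\to t$ for any $t\in[0,T)$ including $t=0$, so the argument of Lemma \ref{lem:sub_super_solution} extends to local extrema at the initial time without modification; apart from this, the strict-maximum reduction and the bookkeeping remark are accurate.
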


The last ingredient to establish the convergence of the numerical scheme is a crucial property of the final boundary value problem: a comparison principle. Especially when working with differential operators which degenerate on a part of the domain or classes of singularly perturbed operators, imposing strong Dirichlet conditions on a part of the boundary while viscosity boundary conditions on the remainder is appropriate to guarantee comparison and well-posedness. This is reflected in our analysis in the following assumption in combination with the formulation of Definition~\ref{def:vissol_isaacs}.

\begin{assumption} \label{ass:isaacs_comp}
Let $u$ be a viscosity subsolution and $w$ be a viscosity supersolution. Then $u \le w$ on $\O$.
\end{assumption}

\begin{theorem} \label{thm:isaacs_uniform}
One has $\underline{v} = \overline{v} = v$, where $v$ is the unique viscosity solution of equation \eqref{eq:isaacsibvp}. Furthermore,
\begin{align} \label{eq:conv}
\lim_{i \to \infty} \| v_i - v \|_{L^\infty((0,T) \times \O)} = 0.
\end{align}
\end{theorem}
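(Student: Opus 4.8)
The plan is to combine the three ingredients assembled in the preceding sections: (i) the sub- and supersolution properties of the half-relaxed limits $\overline{v}$ and $\underline{v}$ (the theorem just stated before Assumption~\ref{ass:isaacs_comp}, which in turn rests on Lemma~\ref{lem:sub_super_solution}, the boundary lemma, and Lemma~\ref{lem:initalconverge}); (ii) the comparison principle of Assumption~\ref{ass:isaacs_comp}; and (iii) the elementary fact that $\underline{v}\le\overline{v}$ pointwise by construction of the half-relaxed limits \eqref{eq:enum_nvelopes}. This is the standard Barles--Souganidis endgame adapted to the present setting with strong boundary conditions on $\omega$.

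\medskip

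First I would record that $\overline{v}$ is a viscosity subsolution and $\underline{v}$ a viscosity supersolution of \eqref{eq:isaacsibvp}, which is exactly the content of the theorem preceding Assumption~\ref{ass:isaacs_comp}; in particular both envelopes satisfy the final-time condition strongly (Lemma~\ref{lem:initalconverge}) and the strong boundary conditions on $\omega$ (the boundary lemma), so that they are admissible competitors in Definition~\ref{def:vissol_isaacs}. Applying Assumption~\ref{ass:isaacs_comp} with $u=\overline{v}$ and $w=\underline{v}$ gives $\overline{v}\le\underline{v}$ on $\O$. Since $\underline{v}\le\overline{v}$ always holds, we conclude $\underline{v}=\overline{v}=:v$ on $[0,T]\times\oO$, and this common value is simultaneously a sub- and a supersolution, hence the viscosity solution of \eqref{eq:isaacsibvp}; uniqueness of the viscosity solution also follows from Assumption~\ref{ass:isaacs_comp} applied to any two solutions. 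This disposes of the first assertion.

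\medskip

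For the convergence statement \eqref{eq:conv} I would argue by contradiction in the usual way. Suppose $\|v_i-v\|_{L^\infty((0,T)\times\O)}\not\to 0$; then there are $\delta>0$, a subsequence (not relabelled), and points $(\ik s,\il y)$ with $|v_i(\ik s,\il y)-v(\ik s,\il y)|\ge\delta$. By the uniform bound of Theorem~\ref{thm:num_stable} together with compactness of $[0,T]\times\oO$, pass to a further subsequence along which $(\ik s,\il y)\to(t_0,x_0)\in[0,T]\times\oO$ and $v_i(\ik s,\il y)$ converges. If the limit exceeds $v(t_0,x_0)$, then by definition of $\overline{v}$ in \eqref{eq:enum_nvelopes} we get $\overline{v}(t_0,x_0)\ge v(t_0,x_0)+\delta > v(t_0,x_0)$, contradicting $\overline{v}=v$; symmetrically, a limit below $v(t_0,x_0)$ contradicts $\underline{v}=v$ via continuity of $v$ at $(t_0,x_0)$, which holds because $v=\overline{v}=\underline{v}$ is both upper and lower semi-continuous. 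This contradiction establishes \eqref{eq:conv}.

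\medskip

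\emph{Main obstacle.} The only subtle point is that the contradiction argument for \eqref{eq:conv} implicitly uses that $v$ is genuinely continuous on all of $[0,T]\times\oO$, so that values of $v$ at the limit point are controlled by nearby values; this is guaranteed precisely because $v$ coincides with an upper semi-continuous function $\overline{v}$ and a lower semi-continuous function $\underline{v}$. A secondary point is that the limit point $(t_0,x_0)$ may lie on the parabolic boundary (either $\{T\}\times\oO$ or $[0,T]\times\pO$), but this causes no difficulty: the sub-/supersolution properties of $\overline{v},\underline{v}$ already encode the strong final-time condition everywhere on $\oO$ and the strong boundary condition on $\omega$ and the viscosity boundary condition on $\pO\setminus\omega$, and these are exactly the ingredients Assumption~\ref{ass:isaacs_comp} is formulated to accommodate, so the equality $\overline{v}=\underline{v}$ holds up to and including that boundary. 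All remaining steps are routine.
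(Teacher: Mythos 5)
Your proposal is correct and takes essentially the same route as the paper: the paper defers to the standard Barles--Souganidis endgame from the Bellman case in \cite{max_SIAM}, which is precisely the argument you spell out (comparison applied to the half-relaxed limits $\overline{v}$ and $\underline{v}$ to force equality, then a compactness/contradiction argument using continuity of the common limit to upgrade pointwise agreement to uniform convergence). No gaps.
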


\begin{proof}
The proof is essentially identical to the Hamilton--Jacobi--Bellman case described in \cite{max_SIAM}.
\hfill \end{proof}

%%%%%%%%%%%%%%%%%%%%%%%%%%%%%%%%%%%%%%%%%%%%%%%%%%%%%%%%%%%%%%%%%%%%%%%%
\section{Construction of barrier functions}
\label{sec:construct_barrier}
%%%%%%%%%%%%%%%%%%%%%%%%%%%%%%%%%%%%%%%%%%%%%%%%%%%%%%%%%%%%%%%%%%%%%%%%
In order to justify Assumption \ref{ass:barrier_functions_existence} we now present settings for the construction of barrier functions. First, we introduce a method for ensuring the existence of barrier functions for the simpler case of uniformly parabolic operators on a convex domain for fully implicit numerical schemes. After that we show the extension to general IMEX schemes and allow non-convex domains as well as degenerate operators.

We will focus on constructing lower barrier functions $\xi$, since the argument for upper barrier functions $\zeta$ is symmetric and follows by changing the direction of inequalities and exchanging $\sup$ and $\inf$ operators where required.

%%%%%%%%%%%%%%%%%%%%%%%%%%%%%%%%%%%%%%%%%%%%%%%%%%%%%%%%%%%%%%%%%%%%%%%%
\subsection{Barrier functions for uniformly parabolic equations on convex domains}
\label{subsec:uniform_par}
%%%%%%%%%%%%%%%%%%%%%%%%%%%%%%%%%%%%%%%%%%%%%%%%%%%%%%%%%%%%%%%%%%%%%%%%

We assume the existence of a function $\xi \in \{ w \in W^{1,\infty}([0,T] \times \oO) : \Delta \xi \in L^\infty \}$ which solves 
\begin{subequations} \label{xidef}
\begin{align} \label{xidefPDE}
-\lambda \Delta \xi & = M + \gamma_1 \smnorm{\nabla \xi}_{L^{\infty}(\R^d, \O)} + \gamma_2 \infnorm{\xi} && \text{on } \O,\\
\xi & = g && \text{on } \pO, \label{xidefBC}
\end{align}
\end{subequations}
where
\[
\gamma_1 := \hspace{-2mm} \supiab \hspace{-2mm} \infnorm{\biab b + \bbiab b}, \quad \gamma_2 := \hspace{-2mm} \supiab \hspace{-2mm} \infnorm{\biab c + \bbiab c}
\]
and
\begin{align*}
\lambda := \hspace{-2mm} \infiab \hspace{-2mm} \biab a + \bbiab a, \quad
M := \max \Bigl\{ & \lambda \infnorm{v_T}, \sup_{i, \a, \b} \siab F (\il y) + 1 \Bigr\}.
\end{align*}
The Dirichlet boundary conditions of \eqref{xidefBC} are imposed in the strong sense. Because $g$ does not depend on time it is clear that $\xi$ is constant in $t$. We shall therefore often write $\xi(x)$ instead of $\xi(t,x)$. The function $\xi$ is used as barrier for all $y \in \omega$ and $\eps > 0$: $\xi_{y,\eps} := \xi$.

We assume in this subsection that $\O$ is convex. Then, as outlined below Assumption \ref{ass:isaacs_ellproj}, we can construct $\P$ so that $\P \xi$ interpolates $\xi$ on the boundary. Furthermore, we require for the construction of this section that $\Delta v_T \in L^\infty(\O)$ and $v_T = g$ on $\pO$.

To show that a maximum of
\[
v_i(\ik s,\cdot) - P_i \xi(x)
\]
over ${\ik s} \times \oO$ is attained on the boundary for all ${\ik s}$ we use that the $h_i \sikvi I + \Id$ are M-matrices. Hence it is enough to prove that 
\begin{align} \label{ineq:barrier-numsol}
\left((h_i \sikvi I + \Id)(P_i \xi - v_i(\ik s,\cdot))\right)_{\ell} \geq 0 \qquad \forall \, \ell \leq N_i.
\end{align}

%%%%%%%%%%%%%%%%%%%%%%%%%%%%%%%%%%%%%%%%%%%%%%%%%%%%%%%%%%%%%%%%%%%%%%%%
\subsubsection{Fully implicit numerical scheme}
%%%%%%%%%%%%%%%%%%%%%%%%%%%%%%%%%%%%%%%%%%%%%%%%%%%%%%%%%%%%%%%%%%%%%%%%

For the sake of clarity, we begin with a fully implicit numerical scheme of the form
\begin{equation}\label{numsol2}
(h_i \si I^{k,v_i} + \Id)\, v_i(\ik s,\cdot) - v_i(\iko s,\cdot) - h_i \si F^{k,v_i} = 0.
\end{equation}
Recall how $(\alpha_i^{k, \ell}(v_i), \beta_i^{k, \ell}(v_i)) \in A \times B$ is a pair of controls such that $\a_i^{k, \ell}(v_i)$ is a maximiser of \eqref{eq:maximizer} and $\b_i^{k, \ell}(v_i)$ is a minimiser of \eqref{eq:minimizer} for $w = v_i$. Noting that $- \langle \Delta \xi, \hil \phi \rangle = \langle \nabla P_i \xi, \nabla \hil \phi \rangle$ is positive due to \eqref{xidef}, we find that
\begin{align*}
&\left((h_i \sikvi I + \Id)(P_i \xi)\right)_{\ell} \nonumber \\
= \;\; &  (P_i \xi)_{\ell} - h_i \, \bbivi a(\il y) \, \langle \Delta \xi, \hil \phi \rangle \nonumber \\ 
&+  h_i \langle -\bbivi b \cdot \nabla P_i \xi + \bbivi c \P \xi, \hil \phi \rangle \nonumber \\
= \;\; & (P_i \xi)_{\ell} - h_i \, \bbivi a(\il y) \, \langle \Delta \xi, \hil \phi \rangle \nonumber \\
& + h_i \langle -\bbivi b \cdot \nabla \xi + \bbivi c \xi, \hil \phi \rangle \nonumber \\
& + h_i \langle -\bbivi b \cdot \nabla ( P_i\xi-\xi) + \bbivi c (\P \xi - \xi) , \hil \phi \rangle \nonumber \\
\geq \;\; &(P_i \xi)_{\ell} + h_i \langle - \lambda \Delta \xi - \gamma_1 \infnorm{\nabla \xi} - \gamma_2 \infnorm{\xi} , \hil \phi \rangle \nonumber \\
& - h_i \left( \gamma_1 \infnorm{\nabla \P \xi - \nabla \xi} + \gamma_2 \infnorm{\P \xi - \xi} \right) \nonumber \\
\stackrel{\eqref{xidef}}{=}  &(P_i \xi)_{\ell} + h_i M - h_i \left( \gamma_1 \infnorm{\nabla \P \xi - \nabla \xi} + \gamma_2 \infnorm{\P \xi - \xi} \right).
\end{align*}
Furthermore, there is a $\hat{i} \in \N$ such that for all $i \geq \hat{i}$.
\[
\sup_i \gamma_1 \infnorm{\nabla \P \xi - \nabla \xi} + \gamma_2 \infnorm{\P \xi - \xi} \le 1
\]
Assuming $i \geq \hat{i}$ for the remainder of the section, and using the definition of $M$
\[
\left((h_i \sikvi I + \Id)(P_i \xi)\right)_{\ell} \geq (\P \xi)_{\ell} + h_i \, \sup_{j, \a, \b} {\sf F}_j^{(\a,\b)} (y_j^\ell).
\]
Recall that $\xi(x) = \xi(\ik s, x) = \xi(\iko s, x)$. With the numerical scheme \eqref{numsol2} we obtain 
\begin{align} \label{eq:implicit_induction}
\left((h_i \sikvi I + \Id)(P_i \xi - v_i(\ik s,\cdot))\right)_{\ell} & \ge (P_i \xi)_{\ell} - v_i(\iko s,\il y).
\end{align}
Because both $(P_i \xi)_{\ell}$ and $v_i(\ik s,\il y)$ interpolate $g$ on $\pO$, it follows $P_i \xi - v_i(\ik s,\cdot) \in V_i^0$. Owing to the M-matrix property of $h_i \sikvi I + \Id$, 
\begin{align} \label{ineq:induction}
    (P_i \xi)_{\ell} - v_i(\iko s,\il y) \geq 0
\end{align}
thus implies condition \eqref{ineq:barrier-numsol}. Hence, by induction, \eqref{ineq:barrier-numsol} holds for all $k$ as soon as \eqref{ineq:induction} is shown at the final time $\iko s = T$.

From \eqref{xidef} we know that $-\lambda \Delta \xi \geq M$. Then, using $v_i(T,\cdot) = \P v_T$ and $\lambda > 0$,
\begin{align*}
\langle \nabla \left( \P \xi^T - v_i(T,\cdot) \right), \nabla \hil \phi \rangle &= - \langle \Delta (\xi - v_T), \hil \phi \rangle\\
& \geq \langle M/\lambda + \Delta v_T, \hil \phi \rangle \geq M/\lambda - \infnorm{\Delta v_T}.
\end{align*}
The definition of $M$ ensures that $\frac{M}{\lambda} \geq \infnorm{\Delta v_T}$. Using M-matrix property of the discrete Laplacian formed with respect to the nodal basis $\{\hil \phi\}_{\ell}$ we obtain \eqref{ineq:induction} at the final time. 

At this point we proved that the maximum of $v_i(\ik s,\cdot) - P_i \xi$ lies on the boundary as required. It remains to show that for $t \in [0,T]$ we can select a maximiser $r_{t,y,\epsilon}$ of $g-\xi_{y,\epsilon}$ over $\{ t \} \times \pO$ such that $\lim_{\epsilon \tends 0} \ r_{t,y,\epsilon} = y$ and $\lim_{\epsilon \tends 0}\xi_{y,\epsilon}(t,y) - \xi_{y,\epsilon} \ (t, r_{t,y,\epsilon})  \leq 0$. Because $\xi$ attains $g$ on whole the boundary, $r_{t,y,\epsilon} := y$ is already such a choice.

%%%%%%%%%%%%%%%%%%%%%%%%%%%%%%%%%%%%%%%%%%%%%%%%%%%%%%%%%%%%%%%%%%%%%%%%
\subsubsection{IMEX numerical schemes}
%%%%%%%%%%%%%%%%%%%%%%%%%%%%%%%%%%%%%%%%%%%%%%%%%%%%%%%%%%%%%%%%%%%%%%%%
We now extend the above argument to general numerical schemes as defined in \eqref{numsol}. Choosing $\gamma_1, \gamma_2, \lambda$ and $M$ as above, the inequality \eqref{eq:implicit_induction} generalises in the IMEX setting to
\begin{align*} %\label{eq:imex_bound}
\left((h_i \sikvi I + \Id)(P_i \xi - v_i(\ik s,\cdot))\right)_{\ell} & \ge - \left(\! (h_i \sikvi E - \Id) (P_i \xi - v_i(\iko s, \cdot)) \! \right)_{\ell}.
\end{align*}
According to Lemma \ref{lem:monotonicity} we have $- (h_i \sikvi E - \Id) \ge 0$. Therefore, if 
\[
P_i \xi - v_i(\iko s, \cdot)) \geq 0
\]
implies \eqref{ineq:barrier-numsol}. At this point the induction of the previous subsection can be adapted to show that maxima are attained on the boundary. Setting $r_{t,y,\epsilon} := y$ completes the construction.

%%%%%%%%%%%%%%%%%%%%%%%%%%%%%%%%%%%%%%%%%%%%%%%%%%%%%%%%%%%%%%%%%%%%%%%%
\subsection{Barrier functions for degenerate equations and general domains}
%%%%%%%%%%%%%%%%%%%%%%%%%%%%%%%%%%%%%%%%%%%%%%%%%%%%%%%%%%%%%%%%%%%%%%%%

We now want to remove the two main assumptions of section \ref{subsec:uniform_par}, namely the requirements that the differential operator is uniformly parabolic and that the domain is convex.

We form $\omega$ of the $y$ for which it is possible to ensure the existence of strict supersolutions in the following sense: for all $\eps > 0$ one can find a $\xi_{y,\eps}$ satisfying 
\begin{align} \label{xiye}
\sup_{\a} - a^{(\a,\b)} \, \Delta \xi_{y,\eps} - b^{(\a,\b)} \!\cdot\! \nabla \xi_{y,\eps} & + c^{(\a,\b)} \, \xi_{y,\eps} - f^{(\a, \b)} \ge \eps && \text{on } \O
\end{align}
for all $\b \in C(\O; A)$ as well as
\[
\sup_{\eps > 0} (\| \Delta \xi_{y,\eps} \| + \norm{\xi_{y,\eps} }_{W^{1,\infty}(\O)}) < \infty
\]
and
\begin{subequations} \label{xiyeBC}
\begin{align}
v_T -\xi_{y,\eps} & \le - 2 \eps && \text{on } \oO \setminus B_y(\delta(\eps)), \label{xiyeBCa}\\
v_T - \xi_{y,\eps} & \le - \phantom{2}\eps && \text{on } \oO \cap B_y(\delta(\eps)), \label{xiyeBCb}\\
v_T(y)- \xi_{y,\eps}(y) & > - 2 \eps, \label{xiyeBCc}
\end{align}
\end{subequations}
where $B_y(\delta(\eps))$ is the ball centred at $y$ with radius $\delta(\eps) > 0$, which in turn is a positive parameter depending on $\eps$. Observe that $v_T|_{\pO} = g|_{\pO}$ and therefore \eqref{xiyeBC} also provides control on the boundary, due to $g$ and $\xi_{y,\eps}$ being time independent. It ensures that the maximum of $y -\xi_{y,\eps}$ is attained in the vicinity of $y$ and is non-positive. We can view \eqref{xiye} as generalisation of \eqref{xidefPDE} because $\xi$ of \eqref{xidefPDE} is a strict supersolution of $L^{(\a,\b)} w - f^{(\a,\b)} = 0$ for all $\a \in A$, $\b \in B$. 

In light of Assumptions \ref{ass:isaacs_ellproj} and \ref{ass:cons}, we may choose $\hat{i}$ such that 
\begin{align} \label{ineq:perturb_xi}
C_1(\hat{i}) \, (\| \Delta \xi_{y,\eps} \| + \norm{\xi_{y,\eps} }_{W^{1,\infty}(\O)}) + C_2 \sup_{i \ge \hat{i}} \norm{\P \xi_{y, \eps} - \xi_{y,\eps}}_{W^{1,\infty}(\O)} \leq \eps.
\end{align}
where
\begin{align*}
C_1(\hat{i}) := \sup_{\substack{(\a,\b) \in A \times B \\ i \ge \hat{i}}} &\bigl( \sup_\ell \bigl\| a^{(\a,\b)} - \bigl( \biab a(\il y) +  \bbiab a(\il y) \bigr) \bigr\|_{L^\infty({\rm supp} \, \hil \phi)} \\
& + \bigl\| b^{(\a,\b)} - \bigl( \biab b + \bbiab b \bigr) \bigr\|_{L^\infty(\O,\R^d)} \\
&+   \bigl\| c^{(\a,\b)} - \bigl( \biab c + \bbiab c \bigr) \bigr\|_{L^\infty(\O)} + \bigl\| f^{(\a,\b)} - \iab f \bigr\|_{L^\infty(\O)} \bigr)
\end{align*}
and
\begin{align*}
C_2 := \max \bigl\{ 1, \sup_{\substack{(\a,\b) \\ i \in \N}} \| \biab a +  \bbiab a \|_\infty + \| \biab b +  \bbiab b \|_\infty + \| \biab c +  \bbiab c \|_\infty \bigr\}.
\end{align*}

Recall the definition of $(\alpha_i^{k, \ell}(w), \beta_i^{k, \ell}(w)) \in A \times B$  in \eqref{eq:minmaxcontrols}. For the sake of readability, we write $\hat{\b}$ in place of $\beta_i^{k, \ell}(P_i \xi_{y,\eps} - v_i)$ and $\hat{\a}$ in place of $\a_i^{k, \ell, \hat{\beta}}(P_i \xi_{y,\eps} - v_i)$ in this section, i.e.~$\hat{\a}$ and $\hat{\b}$ are optimal choices when evaluating the numerical operator at $P_i \xi_{y,\eps} - v_i$.

Then the numerical method, applied to $P_i \xi^{k+1} - v_i$ with the control of the infimum frozen at $P_i \xi_{y,\eps} - v_i$, returns at the node $\il y$
\begin{align*}
& \bigl( (h_i \sihahb I + \Id) (P_i \xi_{y,\eps} - v_i(\ik s,\cdot)) + (h_i \sihahb E - \Id) (P_i \xi_{y,\eps} - v_i(\iko s,\cdot)) \bigr)_\ell\\
= \, & h_i \bigl( - (\bihahb a (\il y) + \bbihahb a(\il y)) \langle \Delta \xi_{y,\eps}, \hil \phi \rangle - \langle (\bihahb b + \bbihahb b) \cdot \nabla \P \xi_{y,\eps}, \hil \phi \rangle\\
& \phantom{+ \bigl(}+ \langle (\bihahb c + \bbihahb c) \P \xi_{y,\eps} - \ihahb f, \hil \phi \rangle \bigr)\\
\geq \, & \; h_i \, \langle - \hahb a \Delta \xi_{y,\eps} - \hahb b \cdot \nabla \xi_{y,\eps} + \hahb c \xi_{y,\eps} - \hahb f, \hil \phi \rangle - h_i \, \eps\\
\geq \, & 0.
\end{align*}
We conclude with Lemma \ref{lem:monotonicity} that
\begin{align} \label{ineq:pref_induction_step}
P_i \xi_{y,\eps} - v_i(\iko s,\cdot) \geq 0
\end{align}
implies, for $\ell \leq N_i$,
\begin{align} \nonumber
\bigl( (h_i \sihahb I + \Id) (P_i \xi_{y,\eps} - v_i(\ik s,\cdot)) \bigr)_\ell & \geq \bigl( - (h_i \sihahb E - \Id) (P_i \xi_{y,\eps} - v_i(\iko s,\cdot)) \bigr)_\ell \ge 0.
\end{align}

Because unlike in the case of subsection \ref{subsec:uniform_par} the functions $P_i \xi_{y,\eps} - v_i(\ik s,\cdot)$ do not vanish on the boundary we need to extend this result to the case when $N_i < \ell \leq \dim V_i$. Owing to \eqref{ineq:perturb_xi} and \eqref{xiyeBCb} we know that \eqref{ineq:pref_induction_step} also holds on the boundary for all time steps $\iko s$. Furthermore, $\eqref{xiyeBCb}$ implies that \eqref{ineq:pref_induction_step} is satisfied on all of $\oO$ at the final time $\iko s = T$. In summary we have the induction step that if \eqref{ineq:pref_induction_step} on $\oO$ then $P_i \xi_{y,\eps} - v_i(\ik s,\cdot) \geq 0$ on $\oO$ and the induction base to guarantee that the maximum of
\[
v_i(\ik s,\cdot) - P_i \xi(x)
\]
over ${\ik s} \times \oO$ is attained on the boundary for all ${\ik s}$.

It remains to show that for $t \in [0,T]$ we can select a maximiser $r_{t,y,\epsilon}$ of $g-\xi_{y,\epsilon}$ over $\{ t \} \times \pO$ such that $\lim_{\epsilon \tends 0} \ r_{t,y,\epsilon} = y$ and $\lim_{\epsilon \tends 0}\xi_{y,\epsilon}(t,y) - \xi_{y,\epsilon} \ (t, r_{t,y,\epsilon})  \leq 0$. The former holds because of \eqref{xiyeBCa} and \eqref{xiyeBCc}, while the latter follows from \eqref{xiyeBCb}.

Similarly we assume the existence of strict subsolutions in the following sense: For all $y \in \omega$ and $\eps > 0$ one can find a $\zeta_{y,\eps}$ satisfying 
\[
\inf_{\b} - a^{(\a,\b)} \, \Delta \zeta_{y,\eps} - b^{(\a,\b)} \!\cdot\! \nabla \zeta_{y,\eps} + c^{(\a,\b)} \, \zeta_{y,\eps} - f^{(\a, \b)} \le -\eps \qquad \text{on } \O
\]
for all $\a \in C(\O; A)$ as well as
\[
\sup_{\eps > 0} (\| \Delta \zeta_{y,\eps} \| + \norm{\zeta_{y,\eps} }_{W^{1,\infty}(\O)}) < \infty
\]
and
\begin{align*}
v_T -\zeta_{y,\eps} & \ge 2 \eps && \text{on } \oO \setminus B_y(\delta(\eps)),\\
v_T - \zeta_{y,\eps} & \ge \phantom{2}\eps && \text{on } \oO \cap B_y(\delta(\eps)),\\
v_T(y)- \zeta_{y,\eps}(y) & < 2 \eps. \label{xiyeBCc}
\end{align*}

%%%%%%%%%%%%%%%%%%%%%%%%%%%%%%%%%%%%%%%%%%%%%%%%%%%%%%%%%%%%%%%%%%%%%%%%
\section{Numerical experiments} \label{sec:experiments}
%%%%%%%%%%%%%%%%%%%%%%%%%%%%%%%%%%%%%%%%%%%%%%%%%%%%%%%%%%%%%%%%%%%%%%%%
In this section we present two numerical experiments showing the viability of the presented method. In the first experiment, we analyse convergence rates of a fully nonlinear second order Isaacs problem with a known solution and we confirm at least linear convergence in $L^2$, $L^{\infty}$ and $H^1$ norms. In the second experiment we calculate the value function of a stochastic tag-chase game with asymmetric velocities and vanishing diffusion on a non-convex domain. The presented finite element method was implemented in Python with FEniCS \cite{LoggMardalEtAl2012a} and is available from the public repository \cite{github} under the GNU Lesser General Public License.

%%%%%%%%%%%%%%%%%%%%%%%%%%%%%%%%%%%%%%%%%%%%%%%%%%%%%%%%%%%%%%%%%%%%%%%%
\subsection{Isaacs problem with exact solution}
%%%%%%%%%%%%%%%%%%%%%%%%%%%%%%%%%%%%%%%%%%%%%%%%%%%%%%%%%%%%%%%%%%%%%%%%
Let the spatial domain $\O$ in $\R^2$ be the equilateral triangle with vertices $(\pm \sqrt{3},\ \frac{1}{2})$ and $(0,\-1)$. We will study the following Isaacs problem:
\begin{equation}\label{exp:Is1}
-v_t - \inf_{\b \in [\frac{1}{4},\ \frac{1}{2}]} \Bigl\{ \b \sqrt{\frac{x^2+y^2}{T-t+1}} \Delta v+\frac{1}{2}\frac{1}{\sqrt{T-t+1}}\abs{\nabla v} \Bigr\} = -\frac{1}{2}\frac{\sqrt{x^2+y^2}}{(T-t+1)^{3/2}}.
\end{equation}
Equation \eqref{exp:Is1} is indeed of Isaacs type because the Euclidean norm of the gradient may be written alternatively as
\[\abs{\nabla v} = \sup_{\{\a \in \R^2 : \abs{\a}=1\}}\{\a \cdot \nabla v\}.
\]
One can now verify through direct calculation that the function
\begin{align} \label{ex1:sol}
v(x,y,t) = \exp\left(-\sqrt{\frac{x^2+y^2}{T-t+1}}\right)+\sqrt{\frac{x^2+y^2}{T-t+1}}
\end{align}
solves \eqref{exp:Is1} exactly. The final and boundary data is also given by the right-hand side of \eqref{ex1:sol}. 

We split the scheme so that the advection term is treated explicitly. Note that in order to ensure the monotonicity of the scheme we may also assign part of the diffusion to the explicit operator. In order to improve the rate of convergence this was done locally, \ie on a node-wise basis. In case the naturally occurring diffusion at the node is not sufficient, we introduce artificial diffusion. Overall, this approach leads to artificial diffusion near the origin where the differential operator is degenerately elliptic, while for the majority of nodes of the mesh it is zero. We choose the largest timestep guaranteeing the monotonicity of the method which leads to $O(h_i)=O(\Delta x_i)$. Any amount of the diffusion left after ensuring the monotonicity of the explicit operators is treated implicitly.

The convergence of the scheme is reflected on Figure \ref{fig:isaacs_error_plot} which plots errors at $t=0$ for different mesh sizes. 
The rates of convergence are as follows:
\begin{center}
\small \begin{tabular}{c | c c | c c | c c} 
 $\Delta x$ & $L^{\infty}$ & Rate & $L^{2}$ & Rate & $H^1$ & Rate \\ [0.5ex] 
 \hline
0.4330 & 1.364\text{e-}02 & 0.66 & 7.062\text{e-}03 & 0.96 & 6.792\text{e-}02 & 0.91 \\
0.2165 & 1.040\text{e-}02 & 0.91 & 3.695\text{e-}03 & 0.78 & 3.737\text{e-}02 & 0.98 \\
0.1083 & 5.715\text{e-}03 & 1.01 & 2.361\text{e-}03 & 0.93 & 1.899\text{e-}02 & 1.01 \\
0.0541 & 2.838\text{e-}03 & 1.07 & 1.266\text{e-}03 & 1.02 & 9.391\text{e-}03 & 1.03 \\
0.0271 & 1.330\text{e-}03 & 1.10 & 6.234\text{e-}04 & 1.06 & 4.581\text{e-}03 & 1.03  \\
0.0135 & 6.034\text{e-}04 & 1.11 & 2.941\text{e-}04 & 1.07 & 2.223\text{e-}03 & 1.03  \\
0.0068 & 2.708e-04 & & 1.374e-04 & & 1.082e-03 \\
[1ex] 
\end{tabular}
\end{center}

\begin{figure}[t]
\begin{center}
\begin{tikzpicture}[scale=0.9]
\begin{loglogaxis}[xlabel = $\textrm{Inverse of mesh size} \ 1/\Delta x$,
ylabel=Error,grid=both,major grid style={black!50}]
\addplot [color=black,mark=x] coordinates {  
    (2.309401076748307, 0.013639266366533676)
    (4.618802153495687, 0.010401435920023294)
    (9.237604306990917, 0.005715015650753497)
    (18.475208613975084, 0.0028381367067700225)
    (36.950417227932824, 0.0013298142339577268)
    (73.90083445584372, 0.0006034492847517559)
    (147.80166891149, 0.0002708195785323664)	
};
\addlegendentry{$L^{\infty}$}

\addplot [color=black,mark=o] coordinates {
    (2.309401076748307, 0.007061530848818077)
    (4.618802153495687, 0.0036946469556077403)
    (9.237604306990917, 0.0023607423346638937)
    (18.475208613975084, 0.0012662600120144404)
    (36.950417227932824, 0.000623378871009345)
    (73.90083445584372, 0.00029407795593793066)
    (147.80166891149, 0.0001373826384599483)
};
\addlegendentry{$L^2$}

\addplot [color=black,mark=square] coordinates {
    (2.309401076748307, 0.0679169001743772)
    (4.618802153495687, 0.03736683894962015)
    (9.237604306990917, 0.01898968371348606)
    (18.475208613975084, 0.009390610646586662)
    (36.950417227932824, 0.004580707884902971)
    (73.90083445584372, 0.002222797240397769)
    (147.80166891149, 0.0010824123226126667)
};
\addlegendentry{$H^1$}
\end{loglogaxis}
\end{tikzpicture}
\caption{Approximation error of Experiment 1}
\label{fig:isaacs_error_plot}
\end{center}
\end{figure}
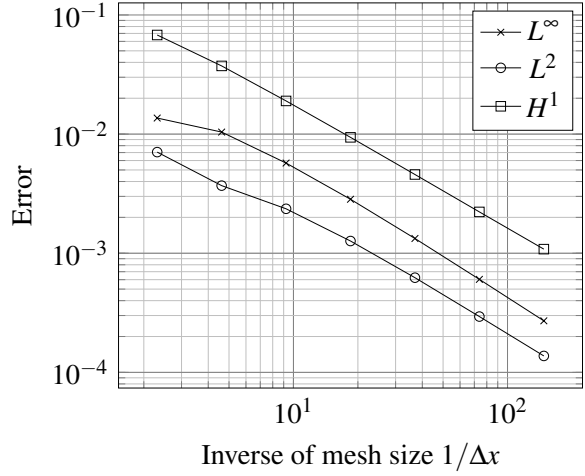

%%%%%%%%%%%%%%%%%%%%%%%%%%%%%%%%%%%%%%%%%%%%%%%%%%%%%%%%%%%%%%%%%%%%%%%%
\subsection{Tag-Chase Game with random noise}
%%%%%%%%%%%%%%%%%%%%%%%%%%%%%%%%%%%%%%%%%%%%%%%%%%%%%%%%%%%%%%%%%%%%%%%%
Imagine two players moving on the $\R^2$ plane. One player is the pursuer (we will denote them by P) and tries to catch the other one, who is the evader (we will denote them by E). Both of them are allowed to choose their direction freely: $\a, \b \in [-\pi,\pi]$ and $\a$ is the direction chosen by the evader, while $\b$ is the direction chosen by the pursuer. The pursuer moves with speed $v^{\b}_P$ while the evader moves with speed $v^{\a}_E$. In particular, the respective speeds are functions of the angles. Additionally, some choices of a direction for the pursuer and the evader may be subject to a random noise behaving like a standard Brownian motion. Having specified the setting we are able to formulate the dynamics explicitly as
\begin{align*}
dx_{1}^{P} &= (v^{\b}_P)_1 \, \sin(\b) dt + \sigma^{\b}_{P}(x_{1}^{P}) \, d W_{(1,P)},\\
dx_{2}^{P} &= (v^{\b}_P)_2 \, \cos(\b) dt + \sigma^{\b}_{P}(x_{2}^{P}) \, d W_{(2,P)},\\
dx_{1}^{E} &= (v^{\a}_E)_1 \, \sin(\a) dt + \sigma^{\a}_{E}(x_{1}^{E}) \, d W_{(1,E)},\\
dx_{2}^{E} &= (v^{\a}_E)_2 \, \cos(\a) dt + \sigma^{\a}_{E}(x_{2}^{E}) \, d W_{(2,E)}.
\end{align*}
Additionally, we assume that
\begin{align*}
W_P &= (W_{(1,P)}, W_{(2,P)}),\\
W_E &= (W_{(1,E)}, W_{(2,E)}),
\end{align*}
are two $\R^2$-valued, mutually independent standard Wiener processes. We reduce this $4$-dimensional problem to a $2$-dimensional one by allowing the origin of the coordinate system to move along with the pursuer. In this case our dynamics are
\begin{align*}
d\hat{x}_1 & = ((v^{\b}_P)_1 \, \sin(\b) - (v^{\a}_E)_1 \, \sin(\a)) \, dt + \sigma^{(\a, \b)}(\hat{x}_1) \, d\hat{W}_{1},\\
d{\hat{x}_2} & = ((v^{\b}_P)_2 \, \cos(\b) - (v^{\a}_E)_2 \, \cos(\a)) \, dt + \sigma^{(\a, \b)}(\hat{x}_2) \, d\hat{W}_{2},
\end{align*}
where $\sigma^{(\a, \b)}(\hat{x}_i) = \sqrt{\sigma^{\b}_P(x^P_i))^2 + (\sigma^{\a}_E(x^E_i))^2}$ and $\hat{W}$ is an $\R^2$-valued standard Wiener process satisfying
\[
\sigma^{(\a,\b)}(\hat{x}_i) \hat{W}_i(t) = \sigma^{\b}_{P}(x_{i}^{P}) W_{i,P}(t) - \sigma^{\a}_{E}(x_{i}^{E}) W_{i,E}(t), \quad t \geq 0,\;i=1,2.
\]

The pursuer catches the evader (and thus wins the game) if they manage to reduce their distance from the evader to some value $r \in \R$. The evader wins the game when they manage to increase the distance to pursuer to some given $R > r$ or if they manage to avoid the capture before some time $T$. Note that in this case the spatial domain of the problem becomes $\overline{\O} := \overline{B}(0,R) \setminus B(0,r)$. 

Mathematically, the evader receives the pay-out of $1$ whenever they win the game and receive $0$ otherwise. We write for the expected pay-out to the evader $\mathcal{J}(\vec{\a},\vec{\b},x,t)$, where $\vec{\a}: [0,T] \to A$, $\vec{\b}: [0,T] \to B$ are functions of time so that $\vec{\a}(t)$, $\vec{\b}(t)$ are the controls chosen by the evader and pursuer at time $t$, respectively.

The value function
\[
v(x,t) := \adjustlimits \inf_\beta \sup_\alpha \mathcal{J}(\a,\b,x,t)
\]
then solves the second-order Isaacs equation
\begin{align*}
-\p_t v  + \inf_\beta \sup_\alpha \big(- a^{(\a, \b)} \Delta v - b^{(\a,\b)} \cdot \nabla v \big) = 0 \qquad & \text{on } [0,T) \times \O,\\
v = 1 \qquad & \text{on } [0,T) \times \{ x : \| x \| = R\} ,\\[2mm]
v = 0 \qquad & \text{on } [0,T) \times \{ x : \| x \| = r\} ,\\[2mm]
v = 1 \qquad & \text{on } \{T\} \times \oO,
\end{align*}
where 
\[
a^{(\a,\b)} := \frac{(\sigma^{(\a, \b)})^2}{2}, \qquad 
b^{\a,\b} := \begin{pmatrix} 
    (v_P^{\b})_1 \, \sin(\b) - (v_E^{\a})_1 \, \sin(\a) \\
    (v_P^{\b})_2 \, \cos(\b) - (v_E^{\a})_2 \, \cos(\a)
\end{pmatrix}.
\]
We shall assume that the pursuer is faster than the evader when moving in the horizontal direction. Moreover, we assume that the diffusion in the upper part of the domain scales with the vertical position $x_2$, while it is constant where $x_2 \le 0.1$. Specifically, this corresponds to the following choice of the diffusion and advection coefficients:
\begin{align*}
a^{(\a,\b)} = \max\{x_2,0.1\}, \qquad
b^{(\a,\b)} &=
\begin{pmatrix} 
    4 \sin(\b) - 0.5 \sin(\a) \\
    \cos(\b) - \cos(\a)
\end{pmatrix}.
\end{align*}
The numerical approximation of the value function is displayed in Figure \ref{fig:asymmetric_stochatic_chase}. Note the asymmetric nature of the graph, due to the different speeds of the pursuer and the evader in the horizontal direction as well as a larger effect of the stochastic component of the equation in the upper part of the domain.

\begin{figure}[t]
\centering
\includegraphics[width=\textwidth, trim={15cm 2.5cm 10cm 2.5cm},clip]{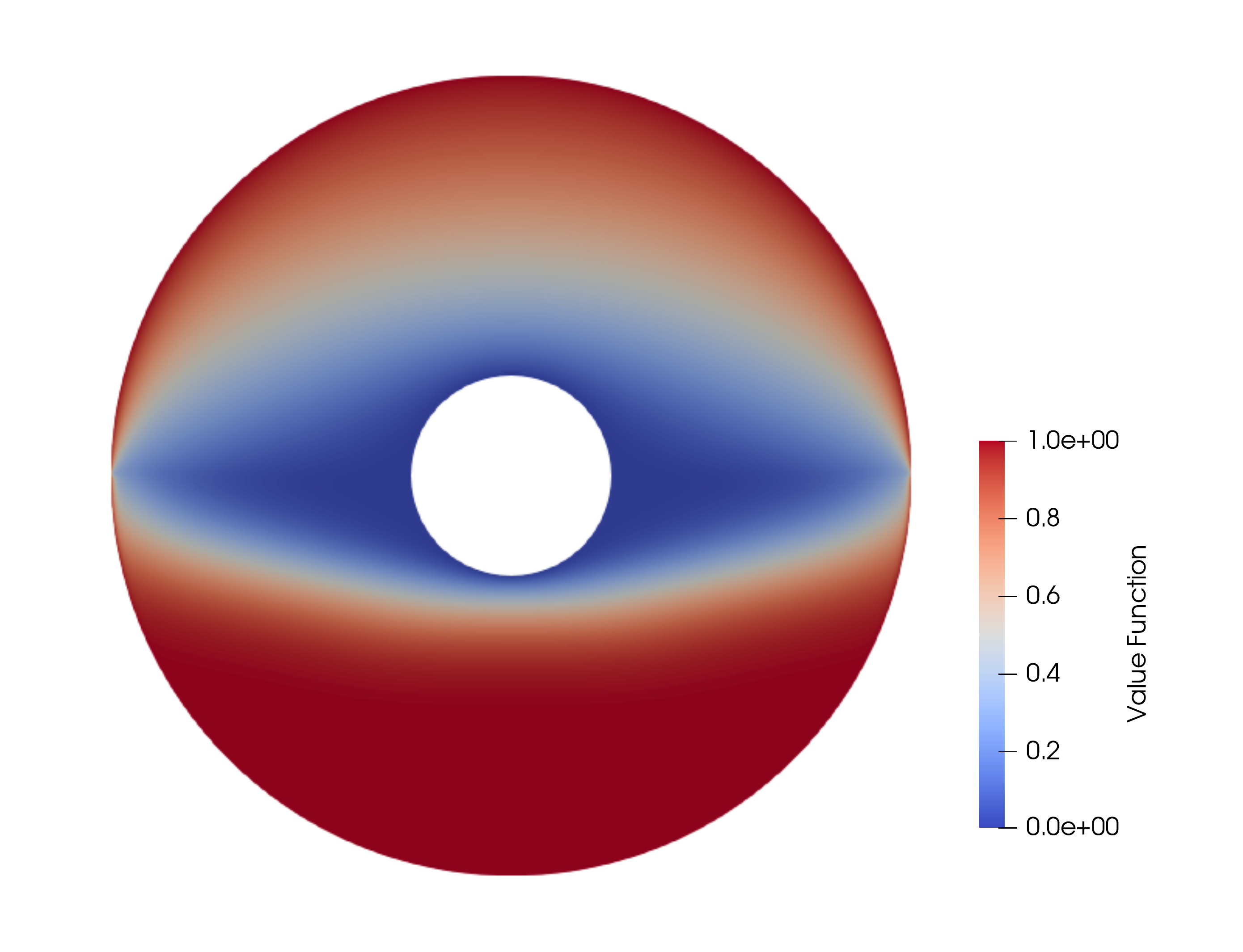}
\caption{The value function at $t=0$ for asymmetric velocities and the diffusion having a lower value in the lower part of $\O$.}
\label{fig:asymmetric_stochatic_chase}
\end{figure}

\section*{Declarations}

{\em Research funding:} Bartosz Jaroszkowski acknowledges the support of the EPSRC grant 1816514. Max Jensen acknowledges the support of the Dr Perry James Browne Research Centre.

\noindent {\em Conflicts of interest:} The authors declare that they have no competing interest.

\noindent {\em Code availability:} At \url{https://doi.org/10.5281/zenodo.4598310}.

\newcommand{\etalchar}[1]{$^{#1}$}

%\bibliographystyle{alpha}
%\bibliography{references}

\end{document}